\documentclass[12pt,reqno]{amsart}
\usepackage[a4paper,margin=2.5cm,top=2.5cm,bottom=2.5cm,centering,headheight=3ex,headsep=4ex,vcentering]{geometry}

\usepackage{orcidlink}

\usepackage[cal=cm,scr=euler]{mathalfa}
\usepackage{microtype}
\usepackage{mlmodern}
\usepackage{amsfonts,amsmath,amssymb,amsthm,hyperref,setspace,verbatim,longtable}

\usepackage{xcolor}

\newcommand\K{\rule{0pt}{2.6ex}}
\newcommand\D{\rule[-1.2ex]{0pt}{0pt}}

\usepackage[normalem]{ulem}

\usepackage{color}
\definecolor{ao(english)}{rgb}{0.0, 0.0, 1.0}

\hypersetup{colorlinks=true, linkcolor=ao(english),citecolor=ao(english)}

\newcommand{\eo}{\mathcal{EO}}
\newcommand{\eob}{\overline{\mathcal{EO}}}
\newcommand{\eou}{\mathcal{EO}_u}

\newtheorem{theorem}{Theorem}[section]
\newtheorem{conjecture}{Conjecture}[section]
\newtheorem{corollary}{Corollary}[section]
\newtheorem{lemma}{Lemma}[section]

\numberwithin{equation}{section}

\title{Congruences for the Andrews--Uncu partition function $\mathcal{EO}_u(n)$}

\author[N. D. Baruah]{Nayandeep Deka Baruah\, \orcidlink{0000-0003-1129-2929}}
\address[N. D. Baruah]{Department of Mathematical Sciences, Tezpur University, Napaam, Assam 784028, India}
\email{nayan@tezu.ernet.in}
\author[H. Das]{Hirakjyoti Das\, \orcidlink{0000-0002-5540-5625}}
\address[H Das]{Department of Mathematics, B. Borooah College (Autonomous), Guwahati 781007, Assam, India}
\email{hirak@bborooahcollege.ac.in}

\author[M. P. Saikia]{Manjil P. Saikia\,\orcidlink{0000-0002-2997-6731}}
\address[M. P. Saikia]{Mathematical and Physical Sciences division, School of Arts \& Sciences, Ahmedabad University, Navrangpura, Ahmedabad 380009, Gujarat, India}
\email{manjil.saikia@ahduni.edu.in}

\author[A. Sarma]{Abhishek Sarma\,\orcidlink{0009-0005-0075-8000}}
\address[A. Sarma]{Department of Basic Sciences and Humanities, Assam Skill University,  Mangaldai 784125, Assam, India}
\email{abhitezu002@gmail.com}

\keywords{Integer partitions; Restricted integer partitions; Congruences; $q$-Series; Modular form; Radu's algorithm}

\subjclass[2020]{11P81, 11P83, 05A17.}

\allowdisplaybreaks
\begin{document}
	\maketitle

	\begin{abstract}
		In 2018, Andrews defined and studied the partition function $\mathcal{EO}(n)$, which counts the number of partitions of $n$ where each even part is less than each odd part. Thereafter, Uncu considered a different subset of such partitions, namely those partitions counted by $\mathcal{EO}(n)$ where there are no repeated even parts (we denote the number of such partitions by $\mathcal{EO}_u(n)$). In this paper, we prove several congruences modulo powers of $2$ for $\mathcal{EO}_u(n)$. We also prove some infinite families of congruences as well as a recurrence for the number of this class of partitions. Our methods involve elementary, algorithmic, and modular form techniques.
	\end{abstract}

	\section{Introduction}
	A partition of a positive integer $n$ is a finite non-increasing sequence of positive integers $\lambda=(\lambda_1, \lambda_2, \ldots, \lambda_k)$ such that $\lambda_1+ \lambda_2+ \cdots+ \lambda_k=n$. For instance, $(4),(3, 1), (2, 2), (2, 1, 1)$ and $(1, 1, 1, 1)$ are the five partitions of $4$. The number of partitions of $n$ is denoted by $p(n)$, and with the convention that $p(0)=1$, its generating function is given by
	\begin{align}
		\sum_{n= 0}^\infty p(n)q^n=\frac{1}{(q;q)_\infty},\label{p(n)}
	\end{align}
	where
	\[
	(a;q)_\infty:=\prod_{i= 0}^\infty (1-aq^i), \quad |q|<1.
	\] Throughout the paper, we use the notation $f_{k} := (q^k;q^k)_{\infty}$. Over the last several decades, various mathematicians have studied many different aspects of partitions as well as several restricted classes of partitions. For a general survey of the theory of partitions, we refer the reader to the books of Andrews \cite{AndrewsBook} and Johnson \cite{Johnson}.
	
	A recent example of studies in restricted classes of partitions are the partitions whose parts are separated by parity. These partitions were first studied by Andrews \cite{Andrews, AndrewsComb}, which thenceforth have been studied by several other mathematicians. Let $\eo(n)$ denote the number of partitions of $n$ in which each even part is less than each odd
	part. Further, let $\eob(n)$ denote the number of partitions of $n$ counted by $\eo(n)$ in which only the largest part even part appears an odd number of times. Andrews \cite[Corollary 3.2]{Andrews} proved that the generating function of $\eob(n)$ is given by
	\begin{equation}\label{gf:eob}
		\sum_{n=0}^\infty \eob(n)q^n=\dfrac{f_4^3}{f_2^2}.
	\end{equation}
	By looking at a different subset of $\eo(n)$, Uncu \cite{Uncu} considered the partition numbers $\eou(n)$ whose generating function is given by
	\begin{equation}\label{gf:eou}
		\sum_{n=0}^\infty \eou(n)q^n=\dfrac{f_4^2}{f_2^2}.
	\end{equation}
	From the generating function it is clear that $\eou(n)=0$ when $n$ is odd, so we restrict to the subset of the partitions counted by $\eo(2n)$. The function $\eou(2n)$ gives the number the partitions counted by $\eo(2n)$, where there are no repeated even parts. 
	
	One of the central themes in the study of integer partitions are their congruence properties. The impetus for this comes from Ramanujan's famous congruences of $p(n)$ modulo $5, 7$ and $11$. For the partitions defined above, Andrews \cite[Eq. (1.6)]{Andrews} proved that for all $n\geq 1$, we have  \[\eob(10n+8)\equiv 0 \pmod 5,\] while Ray and Barman \cite[Theorem 1.3]{RayBarman} further proved by using an algorithm due to Radu \cite{Radu2} that for all $n\geq 1$, we have
	\[
	\eob(50n+r)\equiv  0 \pmod{20} \quad \text{for}\quad r\in\{18,28,38,48\}.
	\] They also proved other results using the theory of modular forms. This was followed by other mathematicians' work on the $\eob(n)$ partition function, such as those of Guadalupe \cite{Guadalupe}, Garvan and Morrow \cite{GarvanMorrow} and the last author \cite{Sarma}. However, very less attention has been devoted to the $\eou(n)$ partition function. The objective of this paper is to remedy this.

	Goswami and Jha \cite{GoswamiJha} proved several congruences for $\eo(n)$, $\eob(n)$, and $\eou(n)$ using the theory of modular forms and  $2$-dissection formulas from Ramanujan's notebook. They also found some generating functions for $\eou(n)$, which were also proved by Pore and Fathima \cite{PoreFathima}. As an example, Goswami and Jha proved, for all $n\geq 1$, we have
	\begin{align*}
		\eob(4n+2)&\equiv 0 \pmod 2, \\
		\eou(4n+2)&\equiv 0 \pmod 2.
	\end{align*}
	At the end of the paper, they conjectured the following congruences.
	\begin{conjecture}\label{conj}
		For all $n\geq 1$, we have
		\begin{align}\label{eob:cong-conj}
			\eob(10n+r)&\equiv 0 \pmod 2,\quad r\in\{2,4\},\\
			\label{eou:cong-conj}
			\eou(10n+r)&\equiv 0 \pmod 2,\quad r\in\{2,6\}.
		\end{align}
	\end{conjecture}
	\noindent In Section \ref{Elementary Approach} (Theorem \ref{thm-2}), we prove that this conjecture is true.
	
	In another recent work, Rahman and N. Saikia \cite{RahmanSaikia} proved two infinite families of congruences modulo $2$ and $4$ for $\eou(n)$. It should be noted that the congruences found by Goswami and Jha \cite{GoswamiJha} and Pore and Fathima \cite{PoreFathima} were all modulo $2$. This paper aims to prove several Ramanujan-type congruences for $\eou(n)$ modulo $2, 4, 16$, $32$, and $64$ as well as some other results. For example, besides confirming Conjecture \ref{conj} using $q$-series, we find the generating function of $\eou(10n+4)$ (see Theorem \ref{thm:gf-14-8}) and  prove that (see Corollary \ref{thm:inf-1}) for all $n\geq 0$ and $k\ge0$, we have
	\begin{align*}
		\eou\left(2\times 5^{2k+2}n+2\times 5^{2k+1}r+\dfrac{5^{2k+2}-1}{6}\right)&\equiv  0 \pmod{16},\quad r\in\{1,2,3,4\}.
	\end{align*}
	For another example, using modular forms, we derive (see Theorem \ref{Thm SS EOu 4n+2}) that
	\begin{align*}
		\sum_{n=0}^\infty \eou (116n+82)q^n\equiv q^{15}\sum_{n=0}^\infty \eou (4n+2)q^{29n}\pmod{4}.
	\end{align*}

	The paper is organized as follows: in the next section, we refer to some preliminaries of three different techniques, namely elementary $q$-series, modular form, and algorithmic approaches, through which, we obtain our results. Sections \ref{Elementary Approach}--\ref{Algorithmic Approach} contain our results along with their proofs according to our approaches. We end the paper with some concluding remarks in Section \ref{sec:conc}.

	\section{Preliminaries}
	\subsection{Preliminaries to the Elementary Approach}
	Ramanujan's theta function \cite[p. 34, (18.1)]{BerndtIII} is defined by
	\[
	f(a,b)=\sum_{n=-\infty}^\infty a^{n(n+1)/2}b^{n(n-1)/2}, \quad |ab|<1.
	\]
	A  special case of $f(a,b)$, which comes in our use, is
	\begin{align}
		\psi(q)&=f(q,q^3)=\sum_{n= 0}^\infty q^{n(n+1)/2}=(q^2;q^2)_\infty (-q;q)_\infty.\label{psi}
	\end{align}
	We need the following two identities from \cite[Theorem 3.1]{NDB}:
	\begin{align}
		\label{bbo1} \psi^2(q)-q\psi^2(q^5)&=  \dfrac{f_2f_5^3}{f_1f_{10}},\\
		\psi^2(q)-5q\psi^2(q^5)&= \dfrac{f_1^3f_{10}}{f_2f_5}.\notag
	\end{align}
	Multiplying these identities, we get
	\begin{equation}\label{eq:ndb-1}
		\psi^4(q)-6q\psi^2(q)\psi^2(q^5)+5q^2\psi^4(q^5)=f_1^2f_5^2.
	\end{equation}
	
	We also require the following 
	identities \cite[ (2.6) and (2.29)]{NDBBegum}:
	\begin{align}\label{eq:nilufar-2}
		\frac{f_5}{f_2^2f_{10}}&=\frac{f_5^5}{f_1^4f_{10}^3}-4q\frac{f_{10}^2}{f_1^3f_2},\\
		\label{eq:nilufar-1}
		\frac{f_2^3 f_5^2}{f_1^5 f_{10}^2}&=\frac{f_5}{f_2^2f_{10}} +5q\frac{f_{10}^2}{f_1^3f_2}.
	\end{align}
	
	In what follows next in this subsection, we have dissection identities of some $q$-products. First, we have a 2-dissection from  \cite{matching}:
	\begin{align}
		\frac{1}{f_{1}^2} &= \frac{f_{8}^5}{f_{2}^5 f_{16}^2} + 2 q \frac{f_{4}^2 f_{16}^2 }{f_{2}^5 f_{8}}.\label{disf1}
	\end{align}
	Next, the 5-dissection of $f_1$ first stated by Ramanujan \cite[p. 212]{Ramanujan} and proved by Watson \cite{Watson} and the 5-dissection of $\psi(q)$ from  \cite[p. 49]{BerndtIII}:
	\begin{align}
		\label{5-diss}
		f_1&=f_{25}\left(\frac{1}{R(q^5)}-q-q^2R(q^5)\right),\\
		\label{5-diss-psi}\psi(q)&=f\left(q^{10},q^{15}\right)+qf\left(q^5,q^{20}\right)+q^3\psi\left(q^{25}\right),
	\end{align}
	where 
	$\displaystyle{R(q)=\frac{(q;q^5)_\infty (q^4;q^5)_\infty}{(q^2;q^5)_\infty(q^3;q^5)_\infty}}$ is the $q$-product representation of  Rogers-Ramanujan continued fraction $R(q)$.
	
	We also have the $7$-dissection of $f_1$ from  \cite[p. 303, Entry 17(v)]{BerndtIII}:
	\begin{equation}\label{eq:7-d}
		f_1=f_{49} (A^\prime(q^7)-qB^\prime(q^7)-q^2+q^5C^\prime(q^7)).
	\end{equation}
	where
	\[
	A^\prime(q^7)=\frac{f(-q^{14},-q^{35})}{f(-q^7,-q^{42})}, \quad B^\prime(q^7)=\frac{f(-q^{21},-q^{28})}{f(-q^{14},-q^{35})}, \quad C^\prime(q^7)=\frac{f(-q^7,-q^{42})}{f(-q^{21},-q^{28})}.
	\]
	
	Note that \eqref{5-diss} and \eqref{eq:7-d} are special cases of a result of Ramanathan \cite[Theorem 1]{Ramanathan} (also independently proved by Evans \cite{Evans}), which is stated below.
	
	\begin{theorem}\cite[Theorem 12.1]{BerndtIII}\label{lem:1}
		Let $n$ be a natural number with $n\equiv \pm 1 \pmod 6$ and let $g\geq 1$. If $n=6g+1$, then
		\begin{equation}
			f_1= f_{n^2}\Bigg((-1)^gq^{(n^2-1)/24}+\sum_{k=1}^{(n-1)/2}(-1)^{k+g}q^{(k-g)(3k-3g-1)/2}\frac{f(-q^{2nk},-q^{n^2-2nk})}{f(-q^{nk},-q^{n^2-nk})} \Bigg).
		\end{equation}
		If $n=6g-1$, then
		\begin{equation}
			f_1= f_{n^2}\Bigg((-1)^gq^{(n^2-1)/24}+\sum_{k=1}^{(n-1)/2}(-1)^{k+g}q^{(k-g)(3k-3g+1)/2}\frac{f(-q^{2nk},-q^{n^2-2nk})}{f(-q^{nk},-q^{n^2-nk})} \Bigg). 
		\end{equation}
	\end{theorem}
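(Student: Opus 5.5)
The plan is to start from Euler's pentagonal number theorem,
\[
f_1=\sum_{m=-\infty}^{\infty}(-1)^m q^{m(3m-1)/2},
\]
and sort its terms according to the residue of $m$ modulo $n$. Write $m=nj+r$, where $r$ runs over a complete residue system modulo $n$ chosen symmetrically about a suitable centre. The exponent then becomes
\[
\frac{(nj+r)(3nj+3r-1)}{2}=\frac{3n^2j^2}{2}+\frac{nj(6r-1)}{2}+\frac{r(3r-1)}{2}.
\]
Each residue class therefore gives $(-1)^r q^{r(3r-1)/2}$ times a theta series of the form $f(a_r,b_r)$ in $q^n$. Here $a_rb_r=q^{3n^2}$, and $a_r/b_r$ is a power of $q^{n}$ depending on $r$.

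Since $n\equiv\pm1\pmod 6$, the value $r(3r-1)/2$ is congruent modulo $n$ to $(n^2-1)/24$ for exactly one residue. This follows from completing the square: $24\cdot r(3r-1)/2+1=(6r-1)^2$, and $6$ is invertible modulo $n$. That residue is $r=g$ with the sign $(-1)^g$ when $n=6g+1$, and the corresponding choice when $n=6g-1$. For this residue, the theta series is $f(-q^{n^2},-q^{2n^2})=f_{n^2}$ by Jacobi's triple product, which produces the term $(-1)^gq^{(n^2-1)/24}f_{n^2}$.

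The remaining residues pair up as $r$ and $2g-r$ in the case $6g+1$, because $r(3r-1)/2$ is symmetric about $r=1/6$ modulo $n$. I will re-index them by $k=1,\dots,(n-1)/2$, so that the exponent shifts become $(k-g)(3k-3g\mp1)/2$. The sign $(-1)^{k+g}$ comes from $(-1)^r$ after the shift.

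For each $k$, I then need the identity
\[
f(-q^{a},-q^{3n^2-a})\text{-type series}=f_{n^2}\,\frac{f(-q^{2nk},-q^{n^2-2nk})}{f(-q^{nk},-q^{n^2-nk})}.
\]
By the Jacobi triple product, the left side is $(q^{a};q^{3n^2})_\infty(q^{3n^2-a};q^{3n^2})_\infty(q^{3n^2};q^{3n^2})_\infty$ with $a$ an appropriate multiple of $n$. On the right side, the quintuple product identity rewrites the quotient of products with moduli $n^2$ as a product with modulus $3n^2$. Equivalently, I can expand both sides as products and cancel: the numerator $f(-q^{2nk},-q^{n^2-2nk})$ splits modulo $2n^2$ into pieces that cancel against the denominator. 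I expect this step to be the \textbf{main obstacle}, namely matching the exponent $a$ to $nk$ exactly and keeping the signs right in both cases.

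Finally, I will check that the exponents of the pieces are distinct modulo $n$, so that the expansion is a genuine $n$-dissection. Since the statement is quoted from \cite[Theorem 12.1]{BerndtIII}, this plan only reconstructs the classical argument of Ramanathan and Evans.
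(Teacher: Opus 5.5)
The step you flag as the main obstacle is false as you have formulated it. No single residue class of the pentagonal series, that is, no series $\pm q^{c}f(-q^{a},-q^{3n^2-a})$, equals $f_{n^2}\,f(-q^{2nk},-q^{n^2-2nk})/f(-q^{nk},-q^{n^2-nk})$ for any choice of $a$. So neither your product-cancellation argument nor the quintuple product identity can turn this quotient into one triple product of modulus $3n^2$. What the quintuple product identity actually gives is a \emph{sum of two} theta series of that modulus.

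A count already shows this. Besides the central class there are $n-1$ residue classes, but only $(n-1)/2$ quotient terms, so each term must absorb a whole pair. Your pairing remark points this way, but your plan never adds the two members of a pair before comparing with the product. For instance, for $n=5$, $k=1$ the target is $f_{25}/R(q^5)=1+q^5-q^{15}-q^{35}-q^{40}-\cdots$. This is the sum of the classes $m\equiv 0$ and $m\equiv 2\pmod 5$. It is not $\pm q^c$ times a single theta series, because matching the first three terms would force $f(q^5,-q^{15})$, which contains the term $-q^{30}$.

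The missing step is as follows.
\begin{itemize}
\item[(a)] Let $S_r$ be the part of $\sum_m(-1)^mq^{m(3m-1)/2}$ with $m\equiv r\pmod n$. Let $c=-g$ if $n=6g+1$ and $c=g$ if $n=6g-1$. Then $(6c-1)^2=n^2$ and $S_c=(-1)^gq^{(n^2-1)/24}f_{n^2}$.
\item[(b)] This corrects two slips in your setup. For $n=6g+1$ the central class is $-g$, not $g$. The pairs are $r\leftrightarrow 2c-r$; your $r\leftrightarrow 2g-r$ is the pairing for $n=6g-1$.
\item[(c)] Put $Q=q^{n^2}$ and $x=q^{nk}$. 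The triple product identity gives
\[
f_{n^2}\frac{f(-x^2,-Q/x^2)}{f(-x,-Q/x)}=(Q;Q)_\infty(-x;Q)_\infty(-Q/x;Q)_\infty(Qx^2;Q^2)_\infty(Q/x^2;Q^2)_\infty .
\]
\item[(d)] The quintuple product identity turns the right side into $\sum_{j}(-1)^jQ^{j(3j+1)/2}\left(x^{-3j}+x^{3j+1}\right)$.
\item[(e)] Match these two theta series with the classes $m\equiv c+k$ and $m\equiv c-k\pmod n$. One checks that $(-1)^{k+g}q^{(k-g)(3k-3g\mp1)/2}$ times this sum equals $S_{c+k}+S_{c-k}$ exactly, with the upper sign for $n=6g+1$.
\item[(f)] Summing over $1\le k\le (n-1)/2$ and adding $S_c$ gives the theorem.
\end{itemize}

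Two further claims in your plan are true for prime $n$ but not in general. The claim that exactly one residue produces $(n^2-1)/24$ fails for $n=25$. The claim that the pieces are distinct modulo $n$ fails for $n=25$ and for $n=35$ (take $k=1$ and $k=6$). Neither is needed, since the statement is only an identity.
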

	\noindent A direct application of the multinomial theorem on the above result gives us the following lemma.
	\begin{lemma}\label{lem:2}
		Let $n$ be a natural number with $n\equiv \pm 1 \pmod 6$ and let $g\geq 1$. If $n=6g+1$, then
		\[
		f_1^2 \equiv f_{n^2}^2\Bigg(q^{(n^2-1)/12}+\sum_{k=1}^{(n-1)/2}q^{(k-g)(3k-3g-1)}\frac{f(-q^{2nk},-q^{n^2-2nk})^2}{f(-q^{nk},-q^{n^2-nk})^2}\Bigg) \pmod 2,
		\]
		If $n=6g-1$, then
		\[
		f_1^2\equiv  f_{n^2}^2\Bigg(q^{(n^2-1)/12}+\sum_{k=1}^{(n-1)/2}q^{(k-g)(3k-3g+1)}\frac{f(-q^{2nk},-q^{n^2-2nk})^2}{f(-q^{nk},-q^{n^2-nk})^2}\Bigg) \pmod 2.
		\]
	\end{lemma}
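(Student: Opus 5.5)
The plan is to square the identity of Theorem \ref{lem:1} and reduce modulo $2$, using the Frobenius property: over $\mathbb{Z}/2\mathbb{Z}$ the map $x\mapsto x^2$ is additive. Write the bracketed factor in Theorem \ref{lem:1} as $S=\sum_{j=0}^{(n-1)/2} a_j$. Here
\[
a_0=(-1)^g q^{(n^2-1)/24},\qquad a_k=(-1)^{k+g}q^{e_k}\frac{f(-q^{2nk},-q^{n^2-2nk})}{f(-q^{nk},-q^{n^2-nk})},
\]
where $e_k=(k-g)(3k-3g\mp 1)/2$ according as $n=6g\pm 1$.

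By the multinomial theorem,
\[
S^2=\sum_j a_j^2+2\sum_{i<j}a_ia_j .
\]
All cross terms carry the factor $2$, so $S^2\equiv\sum_j a_j^2 \pmod 2$. Then $f_1^2=f_{n^2}^2S^2\equiv f_{n^2}^2\sum_j a_j^2\pmod 2$.

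Next I compute each square. The signs disappear, since $(\pm1)^2=1$. Hence $a_0^2=q^{(n^2-1)/12}$ and
\[
a_k^2=q^{2e_k}\frac{f(-q^{2nk},-q^{n^2-2nk})^2}{f(-q^{nk},-q^{n^2-nk})^2}.
\]
The exponent is $2e_k=(k-g)(3k-3g\mp1)$, exactly as in the statement. This gives both cases of Lemma \ref{lem:2}.

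The only point needing care is that the congruence makes sense as a congruence of formal power series with integer coefficients. Each quotient $f(-q^{2nk},-q^{n^2-2nk})/f(-q^{nk},-q^{n^2-nk})$ has integer coefficients. By the Jacobi triple product, the denominator is a product of factors $(1-q^m)$, so it is invertible in $\mathbb{Z}[[q]]$ with constant term $1$. The exponents $e_k$ are integers, because $(k-g)(3(k-g)\mp1)$ is always even. Consequently the cross terms $2a_ia_j$ lie in $2\mathbb{Z}[[q]]$, possibly after multiplying by an integer power of $q$, and their vanishing modulo $2$ is legitimate.

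I expect no real obstacle beyond this integrality check; the argument is the freshman's-dream identity applied to a finite sum.
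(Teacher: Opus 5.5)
Your proposal is correct and is exactly the paper's argument: the paper says only that Lemma \ref{lem:2} follows from Theorem \ref{lem:1} by a direct application of the multinomial theorem, that is, by squaring the bracketed sum and discarding the cross terms $2a_ia_j$ modulo $2$. Your integrality check is a detail the paper leaves implicit; note also that each $e_k$ is a generalized pentagonal number and hence nonnegative, so the hedge ``possibly after multiplying by an integer power of $q$'' is never needed.
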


	\subsection{Preliminaries to the Modular Form Approach}
	This subsection contains the basic definitions and facts concerning modular forms.
	We need the following matrix groups: 
	\begin{align*}
		\textup{GL}_2^{+}(\mathbb{R})&:=\left\{\begin{pmatrix}
			a & b\\
			c & d
		\end{pmatrix}: a,b,c,d\in \mathbb{R}, ad-bc>0\right\},\\
		\textup{SL}_2(\mathbb{Z})&:=\left\{\begin{pmatrix}
			a & b\\
			c & d
		\end{pmatrix}: a,b,c,d\in \mathbb{Z}, ad-bc=1\right\},\\
		\Gamma_0(N)&=\left\{\begin{pmatrix}
			a & b\\
			c & d
		\end{pmatrix}\in \textup{SL}_2(\mathbb{Z}): c\equiv 0\pmod{N}\right\},\\
		\Gamma_1(N)&=\left\{\begin{pmatrix}
			a & b\\
			c & d
		\end{pmatrix}\in \Gamma_0(N): a\equiv d\equiv 1\pmod{N}\right\},\\
		\Gamma(N)&:=\left\{\begin{pmatrix}
			a & b\\
			c & d
		\end{pmatrix}\in \textup{SL}_2(\mathbb{Z}): a\equiv d\equiv 1\pmod{N}, b\equiv c\equiv 0\pmod{N}\right\},
	\end{align*}
	where $N$ is a positive integer.
	
	A  subgroup $\Gamma$ of $\textup{SL}_2(\mathbb{Z})$ is called a congruence subgroup  if $\Gamma(N)\subseteq \Gamma$. The smallest such $N$ is called the level of $\Gamma$. The upper half of the complex plane is denoted by $\mathbb{H}:=\{z\in\mathbb{C}: \textup{Im}(z)>0\}$. The action of $\textup{GL}_2^{+}(\mathbb{R})$ on $\mathbb{H}$ is given by $\begin{pmatrix}
		a & b\\
		c & d
	\end{pmatrix}z=\dfrac{az+b}{cz+d}$. We denote the extended upper half plane by $\mathbb{H}^\star =\mathbb{H}\cup \mathbb{Q}\cup \{\infty\}$, where we identify $\infty:=\dfrac{1}{0}$.  The action of $\textup{GL}_2^{+}(\mathbb{R})$ on $\mathbb{H}^\star$ is given by $\begin{pmatrix}
		a & b\\
		c & d
	\end{pmatrix}\dfrac{r}{s}=\dfrac{ar+bs}{cr+ds}$, where $\dfrac{r}{s}\in\mathbb{Q}\cup \{\infty\}$. A cusp of a congruence subgroup $\Gamma$ of $\textup{SL}_2(\mathbb{Z})$ is an equivalence class in $\mathbb{Q}\cup \{\infty\}$ under the action of $\Gamma$.  The slash operator $\mid_\ell$ for integer $\ell$ and $\gamma=\begin{pmatrix}
		a & b\\
		c & d
	\end{pmatrix}\in \textup{GL}_2^{+}(\mathbb{R})$ on a meromorphic function $f(z)$ in $\mathbb{H}$  is defined by
	\begin{align*}
		\left(f\mid_\ell \gamma\right)(z):= \left(\textup{Det}\gamma\right)^{\ell/2}(cz+d)^{-\ell}f\left(\gamma z\right).
	\end{align*}
	
	A modular form with weight $\ell$ on a congruence subgroup $\Gamma$ of level $N$ is a holomorphic function $f:\mathbb{H}\to \mathbb{C}$ such that for $\begin{pmatrix}
		a & b\\
		c & d
	\end{pmatrix}\in \Gamma$ and $\gamma\in \textup{SL}_2(\mathbb{Z})$, we have
	\begin{align*}
		f\left(\dfrac{az+b}{cz+d}\right)&=(cz+d)^\ell f(z), &
		\left(f\mid_\ell \gamma\right)(z)&=\sum_{n=0}^\infty a_\gamma (n) q_N^n, 
	\end{align*} where $q_N:=e^{2\pi iz/N}$.

	Next, we recall the definition of the Nebentypus character $\chi$ from \cite{Ono04}.  If $\chi$ is a Dirichlet character modulo $N$, then a modular form $f(z)$ of weight $\ell$ with respect to $\Gamma_1(N)$ is said to have the Nebentypus character $\chi$ if for all $\begin{pmatrix}
		a & b\\
		c & d
	\end{pmatrix}\in \Gamma_0(N)$,
	\begin{align*}
		f\left(\dfrac{az+b}{cz+d}\right)&=\chi(d)(cz+d)^\ell f(z).
	\end{align*}
	The space of such modular forms with a  Nebentypus character is denoted by $M_{\ell}(\Gamma_0(N),\chi)$.
	
	Now, for determining the spaces $M_{\ell}(\Gamma_0(N),\chi)$ of certain $\eta$-quotients, we need two lemmas from \cite{Ono04} stated below. The function $\eta(z)$ is defined as $\eta(z):=q^{1/24}\left(q;q\right)_\infty,$ where $z\in \mathbb{H}$ and  $q:=e^{2\pi i z}$. Hereafter in this subsection and Section \ref{Modular Approach}, we take $\eta_n:=\eta(nz)$ for integers $n\ge1$.

	\begin{lemma}\cite[Theorem 1.64]{Ono04}\label{Modularity of Eta-Quotients} For the $\eta$-quotient $f(z):=\displaystyle{\prod_{\delta\mid N}\eta_\delta^{r_\delta}}$, if $\displaystyle{\sum_{\delta\mid N}\delta r_\delta}\equiv 0 \pmod{24}$ and $\displaystyle{\sum_{\delta\mid N}\dfrac{N r_\delta}{\delta }}\equiv 0 \pmod{24}$, then for all $\begin{pmatrix}
			a & b\\
			c & d
		\end{pmatrix}\in \Gamma_0(N)$, $f(z)$ satisfies
		\begin{align*}
			f\left(\dfrac{az+b}{cz+d}\right)=\chi(d)(cz+d)^\ell f(z),
		\end{align*}
		where $\ell:=\frac{1}{2}\displaystyle{\sum_{\delta\mid N}r_\delta}$ is an integer and the character $\chi(d):=\displaystyle{\left(\dfrac{(-1)^\ell\prod_{\delta\mid N}\delta^{r_\delta}}{d}\right)}$.
	\end{lemma}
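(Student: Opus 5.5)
This lemma is quoted from \cite[Theorem 1.64]{Ono04}, but I would prove it by reducing to the transformation law of a single $\eta$ under $\textup{SL}_2(\mathbb{Z})$ and then tracking the roots of unity. Fix $\gamma=\begin{pmatrix} a & b\\ c & d\end{pmatrix}\in\Gamma_0(N)$. For each $\delta\mid N$ we have $\delta\mid c$, so
\[
\delta\,\gamma z=\gamma_\delta(\delta z),\qquad \gamma_\delta:=\begin{pmatrix} a & b\delta\\ c/\delta & d\end{pmatrix}\in \textup{SL}_2(\mathbb{Z}).
\]
The lower row of $\gamma_\delta$ gives $(c/\delta)(\delta z)+d=cz+d$. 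The classical law $\eta(Mz)=\epsilon(M)(c_Mz+d_M)^{1/2}\eta(z)$, with $\epsilon(M)$ a $24$th root of unity, then gives
\[
f(\gamma z)=\Big(\prod_{\delta\mid N}\epsilon(\gamma_\delta)^{r_\delta}\Big)(cz+d)^{\frac12\sum r_\delta}f(z)=\Big(\prod_{\delta\mid N}\epsilon(\gamma_\delta)^{r_\delta}\Big)(cz+d)^{\ell}f(z).
\]
Everything therefore reduces to showing that the product of multipliers equals $\chi(d)$.

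First I dispose of the easy cases. If $c=0$, then $\gamma=\pm\begin{pmatrix}1&b\\0&1\end{pmatrix}$, and $f(z+b)=f(z)$ because the $q$-expansion of $f$ has exponents in $\frac{1}{24}\sum\delta r_\delta+\mathbb{Z}\subseteq\mathbb{Z}$. This is exactly where the first congruence hypothesis enters. The sign $-I$ is handled by checking that $(-1)^\ell$ matches $\chi(-1)$. So I may assume $c>0$.

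For $c>0$ I use Petersson's explicit formula for $\epsilon$. When $d$ is odd it reads
\[
\epsilon\begin{pmatrix} a&b\\ c&d\end{pmatrix}=\Big(\frac{c}{d}\Big)e^{\frac{\pi i}{12}\left[(a+d)c-bd(c^2-1)-3c\right]}.
\]
There is an analogous formula with $\big(\frac{d}{c}\big)$ when $c$ is odd. I apply it to each $\gamma_\delta$, whose entries are $(a,b\delta,c/\delta,d)$, and multiply the $r_\delta$-th powers. The exponent of $e^{\pi i/12}$ becomes a combination of $\sum\delta r_\delta$ and $\sum (c/\delta) r_\delta$. I would rewrite the latter as $\frac{c}{N}\sum \frac{N}{\delta}r_\delta$; together with $ad-bc=1$, $N\mid c$ and the two hypotheses modulo $24$, this should make the total exponential factor trivial. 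What remains is $\prod_\delta\big(\frac{c/\delta}{d}\big)^{r_\delta}$. Using multiplicativity and quadratic reciprocity, with the Kronecker symbol conventions, I would convert this into $\Big(\frac{(-1)^\ell\prod\delta^{r_\delta}}{d}\Big)$. The factor $(-1)^\ell$ comes from the reciprocity sign, and the powers of $c$ combine to $\big(\frac{c}{d}\big)^{2\ell}=1$. The even-$d$ case would be done symmetrically with the $c$-odd formula, or reduced to the odd case by multiplying $\gamma$ by a translation, which does not change $d \bmod N$.

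The main obstacle is this last bookkeeping step. One has to verify that the Dedekind-sum phases cancel exactly, which needs $\ell\in\mathbb{Z}$ and both congruences modulo $24$. One also has to track the signs from quadratic reciprocity and the powers of $2$ carefully enough to land precisely on the character $\chi(d)$ stated in the lemma.
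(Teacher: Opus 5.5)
The paper gives no proof of this lemma; it is quoted from Ono's book. Your route is the standard argument behind that theorem: pass to the multipliers of the individual factors via $\gamma_\delta$, then evaluate them with Petersson's formula. The reduction steps ($\gamma_\delta$, the case $c=0$, the sign $-I$) are fine. The problem is the formula you quote for the decisive case. For $c>0$ and $d$ odd, in the normalization $\eta(Mz)=v(M)(cz+d)^{1/2}\eta(z)$, the correct multiplier is
\[
v(M)=\left(\frac{c}{d}\right)\exp\left(\frac{\pi i}{12}\left[(a+d)c-bd(c^2-1)+3d-3-3cd\right]\right).
\]
The exponent $(a+d)c-bd(c^2-1)-3c$ that you wrote belongs with $\left(\frac{d}{c}\right)$ in the $c$-odd formula. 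For instance, $M=\begin{pmatrix}1&1\\2&3\end{pmatrix}=\begin{pmatrix}1&0\\2&1\end{pmatrix}\begin{pmatrix}1&1\\0&1\end{pmatrix}$ satisfies $\eta(Mz)=e^{-\pi i/12}(2z+3)^{1/2}\eta(z)$, while your formula gives $e^{5\pi i/12}$. This is not cosmetic. When $N$ is even, as at every level used in the paper, $d$ is odd, and this is the formula that applies to all the $\gamma_\delta$ at once.

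With the correct formula, the $c$-dependent terms do vanish modulo $24$ as you say (use $\sum_\delta r_\delta/\delta=\frac{1}{N}\sum_\delta\frac{N}{\delta}r_\delta$ and $N\mid c$), and so does $bd\sum\delta r_\delta$. However, the term $(3d-3)\sum r_\delta=6\ell(d-1)$ survives and leaves the phase $e^{\pi i\ell(d-1)/2}=\left(\frac{-1}{d}\right)^{\ell}$. This, not reciprocity, is where $(-1)^\ell$ comes from. The symbol part $\prod_\delta\left(\frac{c/\delta}{d}\right)^{r_\delta}=\left(\frac{c}{d}\right)^{2\ell}\prod_\delta\left(\frac{\delta}{d}\right)^{r_\delta}$ needs only multiplicativity.

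Carried out with your formula, the phase would cancel completely and you would be left with $\left(\frac{\prod\delta^{r_\delta}}{d}\right)$, with no sign. Reciprocity cannot supply one, since $d$ already sits in the denominator of every symbol. That conclusion is false for odd $\ell$. For $\eta(4z)^6$ on $\Gamma_0(16)$ it gives the trivial character instead of $\left(\frac{-1}{\bullet}\right)$. For the paper's weight-$3$ form $\eta_1\eta_4^3\eta_7\eta_{28}$ it loses the sign in $\left(\frac{-196}{\bullet}\right)$.

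Your description (the phase cancels fully and reciprocity produces the sign) is exactly what happens in the genuine $c$-odd case. There, flipping $\prod_\delta\left(\frac{d}{\delta}\right)^{r_\delta}$ yields, for odd $d>0$, $(-1)^{\frac{d-1}{2}\sum_\delta r_\delta\frac{\delta-1}{2}}=\left(\frac{-1}{d}\right)^{\ell}$, because $\sum\delta r_\delta\equiv0\pmod 4$. You attached that mechanism to the wrong case.

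A smaller point: reducing even $d$ by a translation tacitly uses that $\chi$ is periodic modulo $N$. For odd $N$ this needs $(-1)^\ell\prod\delta^{r_\delta}\equiv1\pmod 4$. That holds, again because $\sum\delta r_\delta\equiv0\pmod 4$, but it should be stated.
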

	
	The next lemma helps us check if an $\eta$-quotient $f(z)$ is holomorphic at all the cusps of $\Gamma_0(N)$ and thereby $f(z)\in M_\ell (\Gamma_0(N), \chi)$.
	
	\begin{lemma}\cite[Theorem 1.65]{Ono04}\label{Order of Eta-Quotients} Let $c,d$, and $N$ be positive integers such that $d\mid N$ and $\textup{gcd}(c,d)=1$. If an $\eta$-quotient $f(z)$ satisfies Lemma \ref{Modularity of Eta-Quotients} for $N$, then the order of vanishing of $f(z)$ at the cusp $\dfrac{c}{d}$ is given by
		\begin{align*}
			\dfrac{N}{24}\sum_{\delta\mid N}\dfrac{\textup{gcd}(d,\delta)^2r_\delta}{\textup{gcd}\left(d,\frac{N}{d}\right) d \delta}.
		\end{align*} 
	\end{lemma}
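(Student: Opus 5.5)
The plan is to compute the leading exponent of $f$ at the cusp by moving the cusp $\frac{c}{d}$ to $\infty$ with a matrix of $\textup{SL}_2(\mathbb{Z})$, and then to rescale by the width of the cusp in $\Gamma_0(N)$. Since $\gcd(c,d)=1$, we can choose $b',d'\in\mathbb{Z}$ with $\gamma=\begin{pmatrix} c & b'\\ d & d'\end{pmatrix}\in \textup{SL}_2(\mathbb{Z})$, so that $\gamma\infty=\frac{c}{d}$. The order of vanishing of $f$ at $\frac{c}{d}$ is then the exponent of the leading term of $(f\mid_\ell\gamma)(z)$, measured in the local uniformizer $q_h=e^{2\pi i z/h}$. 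Here $h$ is the width of the cusp $\frac{c}{d}$ in $\Gamma_0(N)$.

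First we treat each factor $\eta(\delta\gamma z)$ separately. The matrix $\begin{pmatrix}\delta & 0\\ 0 & 1\end{pmatrix}\gamma=\begin{pmatrix}\delta c & \delta b'\\ d & d'\end{pmatrix}$ has determinant $\delta$. By Hermite normal form it factors as $\gamma_\delta\begin{pmatrix} g_\delta & B_\delta\\ 0 & \delta/g_\delta\end{pmatrix}$ with $\gamma_\delta\in\textup{SL}_2(\mathbb{Z})$. Here $g_\delta=\gcd(\delta c,d)$, which equals $\gcd(\delta,d)$ because $\gcd(c,d)=1$. The transformation law $\eta(Mz)=\epsilon_M\,(\text{automorphy factor})^{1/2}\eta(z)$ for $M\in\textup{SL}_2(\mathbb{Z})$, with $\epsilon_M$ a $24$th root of unity, gives the following. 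Up to a nonvanishing automorphy factor and a root of unity, $\eta(\delta\gamma z)$ equals $\eta\big((g_\delta z+B_\delta)g_\delta/\delta\big)$. Since $\eta(\tau)=e^{2\pi i\tau/24}\prod(1-e^{2\pi i n\tau})$, the $q$-expansion of this function (with $q=e^{2\pi i z}$) begins with $q^{g_\delta^2/(24\delta)}$ times a nonzero constant. The remaining factor is a power series in $q^{g_\delta^2/\delta}$ with constant term $1$.

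Next we multiply over $\delta\mid N$ with exponents $r_\delta$. The leading exponent of $(f\mid_\ell\gamma)(z)$ in $q$ is therefore
\[
\frac{1}{24}\sum_{\delta\mid N}\frac{\gcd(d,\delta)^2 r_\delta}{\delta}.
\]
The automorphy factors collect into $(dz+d')^{-\ell}$ times a constant, which is harmless. Lemma \ref{Modularity of Eta-Quotients} ensures that $f$ is invariant (up to $\chi$) under $\Gamma_0(N)$, so the expansion at the cusp is a genuine series in $q_h$.

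Finally we convert to $q_h$, which multiplies the exponent by $h$. The step I expect to need the most care is the computation of $h$. The stabilizer of $\infty$ in $\gamma^{-1}\Gamma_0(N)\gamma$ is generated by $\pm\begin{pmatrix}1&h\\0&1\end{pmatrix}$. Conjugating back, $\gamma\begin{pmatrix}1&h\\0&1\end{pmatrix}\gamma^{-1}$ has lower-left entry $-d^2h$. Membership in $\Gamma_0(N)$ thus forces $N\mid d^2h$, so $h=N/\gcd(d^2,N)$. For $d\mid N$ we have $\gcd(d^2,N)=d\gcd(d,N/d)$. Multiplying the exponent above by $h=\frac{N}{d\gcd(d,N/d)}$ gives exactly the stated formula $\frac{N}{24}\sum_{\delta\mid N}\frac{\gcd(d,\delta)^2r_\delta}{\gcd(d,N/d)\,d\,\delta}$. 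This matches \cite[Theorem 1.65]{Ono04}.
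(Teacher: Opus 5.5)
The paper gives no proof of this lemma; it imports it verbatim from Ono's book (the formula is due to Ligozat). The only comparison to make is that you supply a self-contained argument where the paper relies on a citation.

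Your argument is the standard one, and it is correct. The following steps all check out:
the Hermite factorization $\bigl(\begin{smallmatrix}\delta&0\\0&1\end{smallmatrix}\bigr)\gamma=\gamma_\delta A_\delta$ with upper-left entry $g_\delta=\gcd(\delta c,d)=\gcd(\delta,d)$;
the resulting leading exponent $\frac{1}{24}\sum_{\delta\mid N}\gcd(d,\delta)^2r_\delta/\delta$ of $f\mid_\ell\gamma$ in $q$, where no cancellation can occur because the leading coefficient is a product of nonzero constants;
and the width $h=N/\gcd(d^2,N)=N/(d\gcd(d,N/d))$, read off from the lower-left entry $-d^2h$ of $\gamma T^h\gamma^{-1}$.
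Their product is exactly the stated formula.

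Two sentences should be tightened.

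First, the direction of the automorphy factor. The cocycle relation gives $j(\gamma_\delta,A_\delta z)\,j(A_\delta,z)=dz+d'$ with $j(A_\delta,z)=\delta/g_\delta$. So the half-integral automorphy factors multiply to a constant times $(dz+d')^{\ell}$, not $(dz+d')^{-\ell}$. It is the factor $(dz+d')^{-\ell}$ in the slash operator that cancels it, leaving $f\mid_\ell\gamma$ equal to a constant times $\prod_\delta\eta(A_\delta z)^{r_\delta}$.

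Second, the claim that the expansion is a genuine series in $q_h$ is false in general. The matrix $\gamma T^h\gamma^{-1}\in\Gamma_0(N)$ has lower-right entry $1+cdh$, so $(f\mid_\ell\gamma)(z+h)=\chi(1+cdh)(f\mid_\ell\gamma)(z)$. At irregular cusps $\chi(1+cdh)=-1$ can occur. For instance, $\theta^2=\eta_2^{10}/(\eta_1^4\eta_4^4)\in M_1\bigl(\Gamma_0(4),\bigl(\frac{-4}{\bullet}\bigr)\bigr)$ has order $\frac12$ at the cusp $\frac12$.

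This does not affect your proof. The quantity in the lemma is precisely the leading exponent measured in $q_h$, possibly non-integral, and that is what you computed.
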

	
	We also need the action of the Hecke operator $T_m$ for positive integer $m$ on $\displaystyle{f(z)=\sum_{n=0}^\infty A(n)q^n}$ $\in$  $M_\ell (\Gamma_0(N),\chi)$ defined by
	\begin{align*}
		f(z)\mid T_m:=\sum_{n=0}^\infty \left(\sum_{d\mid (n,m)}\chi(d)d^{\ell-1}A\left(\dfrac{nm}{d^2}\right)\right)q^n.
	\end{align*}
	If $m=p$ is a prime, then
	\begin{align*}
		f(z)\mid T_p:=\sum_{n=0}^\infty \left(A(pn)+\chi(p)p^{\ell-1}A\left(\dfrac{n}{p}\right)\right)q^n,
	\end{align*}
	where $A(n/p)=0$ if $p\nmid n$.

	At last, we need a lemma, which checks the equivalence of two modular forms modulo a prime.
	\begin{lemma}\cite{Stu87}\label{Equivalence of Eta-Quotients} 
		Let $f(z):=\displaystyle{\sum_{n=n_0}^\infty x(n)q^n}$ and $g(z):=\displaystyle{\sum_{n=n_1}^\infty y(n)q^n}$ be two modular forms in  $M_{\ell}(\Gamma_0(N),\chi)$, where $n_0,n_1\ge 0$. For a prime $p$, if $x(n)\equiv y(n) \pmod{p}$ for all $n\le \nu$, where $\nu=\dfrac{\ell N}{12}\displaystyle{\prod_{d\ prime;\ d\mid N}\left(1+\dfrac{1}{d}\right)}$, then $x(n)\equiv y(n) \pmod{p}$ for all $n\ge0$.
	\end{lemma}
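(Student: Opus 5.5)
Since this is Sturm's theorem, quoted from \cite{Stu87}, I would prove it by the standard reduction to level one. Put $h:=f-g\in M_\ell(\Gamma_0(N),\chi)$ and write $h=\sum_{n\ge0}c(n)q^n$ with integral coefficients, as in all our applications. Suppose the conclusion fails, i.e.\ $h\not\equiv 0\pmod p$ as a power series. Dividing $h$ by the largest power of $p$ dividing all $c(n)$ changes neither the hypothesis nor the conclusion. So we may assume $h$ is $p$-integral and its reduction $\bar h$ is nonzero. Let $m$ be the smallest index with $p\nmid c(m)$. The hypothesis gives $m>\nu=\frac{\ell\mu}{12}$, where $\mu=[\mathrm{SL}_2(\mathbb{Z}):\Gamma_0(N)]=N\prod_{d\mid N}(1+1/d)$.

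The key construction is a norm down to $\mathrm{SL}_2(\mathbb{Z})$. Choose coset representatives $\gamma_1=I,\gamma_2,\dots,\gamma_\mu$ of $\Gamma_0(N)\backslash \mathrm{SL}_2(\mathbb{Z})$ and set
\[
F:=\prod_{i=1}^{\mu} h\mid_\ell \gamma_i .
\]
Right multiplication by any element of $\mathrm{SL}_2(\mathbb{Z})$ permutes the cosets. Each factor changes only by a value of $\chi$, because $h$ transforms under $\Gamma_0(N)$ with the character $\chi$. Hence $F$ transforms under $\mathrm{SL}_2(\mathbb{Z})$ with weight $\ell\mu$ and a character of $\mathrm{SL}_2(\mathbb{Z})$, whose order divides $12$. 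Replacing $F$ by $F^{12}$ and $h$ by $h^{12}$, and noting that $m$ and $\nu$ both scale by $12$, reduces us to a genuine level-one modular form of weight $k=\ell\mu$ (times $12$).

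Next I would show $F$ is $\mathfrak p$-integral with nonzero reduction for some prime $\mathfrak p\mid p$ of $\mathbb{Q}(\zeta_N)$. Each $h\mid_\ell\gamma_i$ has a $q_N$-expansion with coefficients in $\mathbb{Q}(\zeta_N)$ and bounded denominators. This is the $q$-expansion principle of Deligne--Rapoport and Katz applied to the cusps of $X_0(N)$. After scaling each conjugate by a suitable element of $\mathbb{Q}(\zeta_N)^\times$, each factor becomes $\mathfrak p$-integral with nonzero reduction modulo $\mathfrak p$. Since $\mathbb{F}[[q_N]]$ is an integral domain for the residue field $\mathbb{F}$, the product $\bar F$ is nonzero. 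Moreover, $\bar F$ has order at $\infty$ at least $m$, coming from the factor $\bar h$, and the other factors have nonnegative order. So $\mathrm{ord}_\infty\bar F\ge m>k/12$.

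Finally, I would apply the level-one Sturm bound. A level-one form $F$ of weight $k$ with $\mathfrak p$-integral coefficients and $\mathrm{ord}_\infty \bar F>k/12$ must satisfy $\bar F=0$. To see this, use the Miller basis $\{\Delta^{j}E_{k-12j}\}$, whose $q$-expansions are integral and upper triangular with unit leading coefficients, $q^j+O(q^{j+1})$ for $0\le j\le \lfloor k/12\rfloor$. Writing $F$ in this basis over the $\mathfrak p$-integers, the vanishing of $\bar F$ up to $q^{\lfloor k/12\rfloor}$ forces every coordinate to be $0$ mod $\mathfrak p$. This contradicts $\bar F\ne0$, so $h\equiv 0\pmod p$.

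The main obstacle is the integrality step: controlling the denominators of $h\mid\gamma_i$ at the other cusps, especially when $p\mid N$. I would invoke the $q$-expansion principle rather than prove it. For the eta-quotients actually used in this paper, one could alternatively verify it directly using the explicit transformation law of $\eta$.
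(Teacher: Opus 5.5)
The paper gives no proof of this lemma; it simply quotes Sturm's theorem from \cite{Stu87}. Your sketch is the standard proof of that theorem: take the norm $\prod_i h\mid_\ell\gamma_i$ down to $\mathrm{SL}_2(\mathbb{Z})$, raise it to the twelfth power to kill the character of $\mathrm{SL}_2(\mathbb{Z})$, normalize the conjugates $\mathfrak p$-adically, and apply the level-one bound. The bookkeeping is right: $\mathrm{ord}_\infty\overline{F^{12}}\ge 12m>12\nu=\ell\mu$, which is exactly the level-one threshold for weight $12\ell\mu$.

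One step is false as written and must be repaired. The family $\{\Delta^jE_{k-12j}\}$ is not integral, since $E_{12}=1+\frac{65520}{691}\sum_{n\ge1}\sigma_{11}(n)q^n$. Take $k=12$ and $p=691$, and suppose $a_0,a_1\equiv0\pmod{691}$. The triangular solve gives $c_0=a_0\equiv0$, but $c_1=a_1-\frac{65520}{691}a_0$ need not vanish modulo $691$. So your claim that every coordinate is $0$ mod $\mathfrak p$ is unjustified. Also, for $k\equiv2\pmod{12}$ the top member of your family would be the non-modular $E_2$.

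The fix is to use $\Delta^jE_4^aE_6^b$ with $4a+6b=k-12j$. Since your $k=12\ell\mu$ is a multiple of $12$, the forms $\Delta^jE_4^{3(\ell\mu-j)}$ for $0\le j\le\ell\mu$ give a basis of $M_k(\mathrm{SL}_2(\mathbb{Z}))$ with integer coefficients and leading term $q^j$. The rest of your argument then goes through unchanged.

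Two smaller points. First, the opening division by a power of $p$ is vacuous: once $h\not\equiv0\pmod p$, no positive power of $p$ divides all the $c(n)$. Second, the bounded-denominator input you flag is genuinely needed in this paper, because every application has $p=2\mid N$. It follows from finite-dimensionality. Each $h\mid_\ell\gamma_i$ has coefficients in $\mathbb{Q}(\zeta_N)$, and $M_\ell(\Gamma(N))$ is spanned by forms with coefficients in $\mathbb{Z}[\zeta_N]$. Hence $h\mid_\ell\gamma_i$ is a $\mathbb{Q}(\zeta_N)$-linear combination of finitely many integral forms, so its denominators are bounded.
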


	\subsection{Radu's Algorithm}\label{sec:rk}This subsection explains Radu's algorithm which we will use later to prove some results. We use Smoot's \cite{Smoot} implementation of an algorithm of Radu \cite{Radu}, which we describe now. Radu's algorithm can be used to prove Ramanujan type congruences of the form stated in Introduction. The algorithm takes as an input the generating function
	\[
	\sum_{n= 0}^\infty a_r(n)q^n=\prod_{\delta|M}\prod_{n= 1}^\infty (1-q^{\delta n})^{r_\delta},
	\]
	and positive integers $m$ and $N$, with $M$ another positive integer and $(r_\delta)_{\delta|M}$ is a sequence indexed by the positive divisors $\delta$ of $M$. With this input, Radu's algorithm tries to produce a set $P_{m,j}(j)\subseteq \{0,1,\ldots, m-1\}$ which contains $j$ and is uniquely defined by $m, (r_\delta)_{\delta|M}$ and $j$. Then, it decides if there exists a sequence $(s_\delta)_{\delta |N}$ such that
	\[
	q^\alpha \prod_{\delta|M}\prod_{n=1}^\infty (1-q^{\delta n})^{s_\delta} \cdot \prod_{j^\prime \in P_{m,j}(j)}\sum_{n=0}^\infty a(mn+j^\prime)q^n,
	\]
	is a modular function with certain restrictions on its behaviour on the boundary of $\mathbb{H}$.
	
	Smoot \cite{Smoot} implemented this algorithm in Mathematica and we use his \texttt{RaduRK} package, which requires the software package \texttt{4ti2}. Documentation on how to intall and use these packages are available from Smoot \cite{Smoot}. We use this implemented \texttt{RaduRK} algorithm to find the generating function of $\eou(14n+8)$ and some congruences modulo 64.
	
	It is natural to guess that $N=m$ (which corresponds to the congruence subgroup $\Gamma_0(N)$), but this is not always the case, although they are usually closely related to one another. The determination of the correct value of $N$ is an important problem for the usage of \texttt{RaduRK} and it depends on a criterion called the $\Delta^\ast$ criterion \cite[Definitions 34 and 35]{Radu}, which we do not explain here. It is easy to check the minimum $N$ which satisfies this criterion by running \texttt{minN[M, r, m, j]}, which we do now for our generating function. The generating function of $\eou(n)$ given in \eqref{gf:eou} can be described by setting $M=4$ and $r=\{0,-2,2\}$. We are interested in only the case $m=14$ and $j=8$ for finding the generating function of $\eou(14n+8)$. Running \texttt{minN[4, \{0,-2,2\}, 14, 8]} yields $56$ as the output for the minimum choices of $N$. This value of $N$ is large enough for the computation to take quite a while on a modest laptop. So, we are going to use the generating function for $\eou(2n)$ to find the generating function of $\eou(14n+8)$ and congruences in Theorem \ref{thm:cong64}.
	
	In the next three sections, we present our results and their proofs alongside them.
	
	\section{Elementary Approach}\label{Elementary Approach}
	In this section, we obtain our results using $q$-series. We begin by confirming Conjecture \ref{conj}.
	\begin{theorem}\label{thm-2}
		Conjecture \ref{conj} is true.
	\end{theorem}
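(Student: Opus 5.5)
Both parts reduce modulo $2$ to lacunary theta series. I would then read off the vanishing coefficients from the residues of the pentagonal numbers modulo $5$. No deep identity from the preliminaries should be needed, only $f_1^2\equiv f_2 \pmod 2$ and Euler's pentagonal number theorem $f_1=\sum_{k\in\mathbb{Z}}(-1)^kq^{k(3k-1)/2}$.

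\emph{The $\eou$ part.} Since \eqref{gf:eou} is a function of $q^2$, replacing $q^2$ by $q$ gives $\sum_{n\ge0}\eou(2n)q^n=f_2^2/f_1^2$. Using $f_1^2\equiv f_2\pmod 2$,
\[
\sum_{n\ge0}\eou(2n)q^n\equiv \frac{f_2^2}{f_2}=f_2=\sum_{k\in\mathbb{Z}}(-1)^kq^{k(3k-1)} \pmod 2 .
\]
So $\eou(2n)$ is odd exactly when $n=k(3k-1)=2P$ with $P$ a generalized pentagonal number.

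Now $P=k(3k-1)/2$ satisfies $24P+1=(6k-1)^2$. Modulo $5$ the squares are $0,1,4$, and $24P+1\equiv 1-P$, so $P\equiv 0,1,2\pmod 5$. Hence $2P\equiv 0,2,4\pmod 5$, and $n=2P$ is never $\equiv 1$ or $3\pmod 5$. Since $10n+2=2(5n+1)$ and $10n+6=2(5n+3)$, this gives \eqref{eou:cong-conj}. Alternatively one could quote the $5$-dissection \eqref{5-diss} of $f_1$ with $q\to q^2$, but the pentagonal residue argument is cleaner.

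\emph{The $\eob$ part.} From \eqref{gf:eob},
\[
\sum_{n\ge0}\eob(2n)q^n=\frac{f_2^3}{f_1^2}\equiv \frac{f_2^3}{f_2}=f_2^2\equiv f_4=\sum_{k}(-1)^kq^{4\cdot k(3k-1)/2}\pmod 2 .
\]
Hence $\eob(N)$ is odd only if $N=8P$ with $P$ generalized pentagonal, and $\eob(N)=0$ for odd $N$.

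Suppose $8P\equiv 2\pmod{10}$. Then $4P\equiv1\pmod5$, so $P\equiv4$. Suppose instead $8P\equiv 4\pmod{10}$. Then $4P\equiv 2\pmod 5$, so $P\equiv3$. Neither lies in $\{0,1,2\}$, which gives \eqref{eob:cong-conj}.

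\emph{The main obstacle.} The only place needing care is the bookkeeping of the successive rescalings $q\mapsto q^2$ and $f_2^2\equiv f_4$. Dropping a factor of $2$ would spuriously suggest a counterexample: for instance, if the doubling is forgotten, one finds that index $2\cdot 52$ looks bad. So I would double-check that step against small values, e.g.\ $\eob(8)$ odd, $\eob(14)$ and $\eob(24)$ even.
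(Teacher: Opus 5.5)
Your proof is correct, and it takes a more elementary route than the paper's, at least for the $\eou$ half.

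For the $\eob$ half the two arguments are essentially the same. The paper reduces to $\sum\eob(2n)q^n\equiv f_1^4 \pmod 2$ and then reads off the classes $5n+1$ and $5n+2$ from the $5$-dissection \eqref{5-diss}. That dissection having components only in residues $0,1,2$ is precisely your observation that generalized pentagonal numbers lie in $\{0,1,2\}\bmod 5$.

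For the $\eou$ half the routes genuinely differ.
\begin{itemize}
\item The paper rewrites $f_2^2/f_1^2$ through \eqref{bbo1} and \eqref{eq:ndb-1} in terms of $\psi(q)$, $\psi(q^5)$ and $f_1^2f_5^2$. It then applies the $5$-dissections \eqref{5-diss} and \eqref{5-diss-psi}, obtaining exact generating functions for $\eou(10n+2)$ and $\eou(10n+6)$ with an explicit factor $2$.
\item Your reduction $\sum\eou(2n)q^n\equiv f_2 \pmod 2$ gives a complete parity description: $\eou(2n)$ is odd iff $12n+1$ is a perfect square, and likewise $\eob(2n)$ is odd iff $6n+1$ is a perfect square. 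The conjecture then follows from the fact that $2$ and $3$ are non-residues modulo $5$. Many other parity congruences follow the same way; this is close to the mechanism the paper itself uses later in Theorem \ref{thm:cong-p}.
\end{itemize}

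What the paper's heavier route buys is the exact identity \eqref{eq:gf-2n}. It reuses this to get the closed form for $\sum\eou(10n+4)q^n$ in Theorem \ref{thm:gf-14-8}, and from there the congruences modulo $5$, $16$ and $32$. Parity information alone cannot give those results.
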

	\begin{proof}
		We first prove \eqref{eob:cong-conj}.
		From \eqref{gf:eob},
		\begin{equation*}
			\sum_{n=0}^\infty \eob(n)q^n\equiv f_2^4 \pmod 2,
		\end{equation*}
		which gives
		\begin{equation}\label{eq:h-2}
			\sum_{n=0}^\infty \eob(2n)q^n\equiv f_1^4 \pmod 2.
		\end{equation}
		
		Applying \eqref{5-diss} in \eqref{eq:h-2} and then extracting the terms involving powers of $q$ of the types $5n+1$ and $5n+2$, we get
		\begin{align*}
			\sum_{n=0}^\infty  \eob(10n+2)q^n &\equiv f_5^4\left(-\frac{4}{R^3(q)}+2R^2(q)\right) \pmod 2,\\
			\sum_{n=0}^\infty  \eob(10n+4)q^n &\equiv f_5^4\left(\frac{2}{R^2(q)}+4R^3(q)\right) \pmod 2.
		\end{align*}
		These two imply to \eqref{eob:cong-conj}.
		
		We now prove \eqref{eou:cong-conj}. By \eqref{bbo1}, we have
		\begin{equation}\label{eq:eo1}
			\sum_{n=0}^\infty \eou(2n)q^n=\frac{f_2^2}{f_1^2}=\frac{f_{10}^2}{f_5^6}\left(\psi^4(q)-2q\psi^2(q)\psi^2(q^5)+q^2\psi^4(q^5) \right).
		\end{equation}
		Using \eqref{eq:ndb-1},  \eqref{5-diss}, and \eqref{5-diss-psi} in \eqref{eq:eo1}, we obtain
		\begin{align}
			\label{eq:gf-2n} \sum_{n=0}^\infty \eou(2n)q^n &= \frac{f_{10}^2}{f_5^6}\left( f_1^2f_5^2+4q\psi^2(q)\psi^2(q^5)-4q^2\psi^4(q^5)\right)\notag\\
			& = \frac{f_{10}^2}{f_5^6}\Bigg( -4q^2\psi^4(q^5)+4q\psi^2(q^5)\left(f\left(q^{10},q^{15}\right)+qf\left(q^5,q^{20}\right)+q^3\psi\left(q^{25}\right)\right)^2\notag\\
			& \qquad \qquad +f_5^2f_{25}^2\left(\frac{1}{R(q^5)}-q-q^2R(q^5) \right)^2\Bigg).
		\end{align}
		From \eqref{eq:gf-2n}, extracting the terms that involve $q^{5n+1}$ and $q^{5n+3}$, we have
		\begin{align*}
			\sum_{n=0}^\infty \eou(10n+2)q^n&=2\frac{f_2^2}{f_1^6}\Bigg(2\psi^2(q)f^2\left(q,q^4\right)-\frac{f_1^2f_5^2}{R(q)} \Bigg),\\
			\sum_{n=0}^\infty \eou(10n+6)q^n&=2\frac{f_2^2}{f_1^6}\left(2\psi^2f^2\left(q^2,q^3\right)+f_1^2f_5^2R(q)\right),
		\end{align*}
		which together prove \eqref{eou:cong-conj}.
	\end{proof}

	With the aid of Lemma \ref{lem:2}, we can arrive at the following result handily.

	\begin{theorem}\label{thm:cong-p}
		Let $g\geq 1$ and let $p=6g\pm1$. For all $n\geq0$ and
		$0\leq i\leq p-1$, we have
		\[
		\eou(2pn+2i)\equiv0\pmod2,
		\]
		provided that
		\(
		i\not\equiv \frac{p^2-1}{12}\pmod p
		\)
		and
		\(
		i\not\equiv (k-g)(3k-3g\mp1)\pmod p
		\)
		for every $1\leq k\leq (p-1)/2$, where the signs are paired.
		Thus, the minus sign is used in the second expression when
		$p=6g+1$, and the plus sign is used when $p=6g-1$.
	\end{theorem}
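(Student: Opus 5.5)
From \eqref{eq:eo1} we have
\[
\sum_{n=0}^\infty \eou(2n)q^n=\frac{f_2^2}{f_1^2}.
\]
Since $f_2\equiv f_1^2 \pmod 2$, we get $f_2^2/f_1^2\equiv f_1^4/f_1^2=f_1^2 \pmod 2$. Hence
\[
\sum_{n=0}^\infty \eou(2n)q^n\equiv f_1^2 \pmod 2,
\]
and the theorem amounts to locating the residue classes modulo $p$ of the exponents that can occur in the $p$-dissection of $f_1^2$ modulo $2$.

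We apply Lemma \ref{lem:2} with $n=p$. It writes $f_1^2$ modulo $2$ as $f_{p^2}^2$ times a sum of terms. One term is $q^{(p^2-1)/12}$. The others are $q^{(k-g)(3k-3g\mp1)}$ multiplied by the squared quotient
\[
\frac{f(-q^{2pk},-q^{p^2-2pk})^2}{f(-q^{pk},-q^{p^2-pk})^2}.
\]
The key observation is that each such quotient is a power series in $q^p$. Indeed, $f(a,b)$ with $a=-q^{pk}$, $b=-q^{p^2-pk}$ has terms $a^{m(m+1)/2}b^{m(m-1)/2}$, whose exponents are integer combinations of $pk$ and $p^2-pk$, hence multiples of $p$. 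The same holds for the numerator. The reciprocal of a power series in $q^p$ with constant term $1$ is again a power series in $q^p$. Finally, $f_{p^2}^2$ is a series in $q^{p^2}$, so in particular in $q^p$.

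Consequently, modulo $2$, $f_1^2$ is a sum of terms $q^{e}\,G_e(q^p)$, where $e$ runs over $(p^2-1)/12$ and $(k-g)(3k-3g\mp1)$ for $1\le k\le (p-1)/2$. A coefficient of $q^{pn+i}$ with $0\le i\le p-1$ can therefore be nonzero modulo $2$ only if $i\equiv e\pmod p$ for one of these exponents $e$. If $i$ avoids all of these residues, the coefficient of $q^{pn+i}$ in $\sum\eou(2n)q^n$ is even, which is exactly $\eou(2pn+2i)\equiv0\pmod 2$.

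The main point requiring care is the claim that the theta quotients involve only powers of $q^p$. This needs the denominator theta function to be invertible as a power series with integer coefficients, which holds because its constant term is $1$. It also needs the exponent arithmetic checked, together with the sign pairing for $p=6g\pm1$, which is inherited directly from Lemma \ref{lem:2}.
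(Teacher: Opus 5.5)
Your proposal is correct and follows the paper's proof essentially step by step. You reduce $\sum \eou(2m)q^m = f_2^2/f_1^2$ to $f_1^2$ modulo $2$, apply Lemma~\ref{lem:2} with $n=p$, and note that $f_{p^2}^2$ and each squared theta quotient are power series in $q^p$, so only the listed residue classes modulo $p$ can carry odd coefficients. Your extra verifications make explicit what the paper asserts without comment: that $f_2\equiv f_1^2 \pmod 2$, that the theta-function exponents are multiples of $p$, and that the denominators have constant term $1$ and hence are invertible over $\mathbb{Z}$.
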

	
	\begin{proof}
		Modulo $2$, we have
		\begin{align}
			\label{EOu 2n mod 2}
			\sum_{m=0}^{\infty}\eou(2m)q^m
			=\frac{f_2^2}{f_1^2}
			\equiv f_1^2\pmod2.
		\end{align}
		
		Write $p=6g+\varepsilon$, where $\varepsilon\in\{-1,1\}$.
		By Lemma~\ref{lem:2},
		\[
		f_1^2\equiv f_{p^2}^2
		\left(
		q^{(p^2-1)/12}
		+
		\sum_{k=1}^{(p-1)/2}
		q^{(k-g)(3k-3g-\varepsilon)}
		\frac{
			f(-q^{2pk},-q^{p^2-2pk})^2
		}{
			f(-q^{pk},-q^{p^2-pk})^2
		}
		\right)
		\pmod2.
		\]
		Now, $f_{p^2}^2$ is a power series in $q^p$. Moreover, for
		each $k$,
		\(
		\dfrac{
			f(-q^{2pk},-q^{p^2-2pk})^2
		}{
			f(-q^{pk},-q^{p^2-pk})^2
		}
		\)
		is also a power series in $q^p$. It follows that, modulo $2$, the only residue classes modulo
		$p$ that can occur among the exponents of $f_1^2$ are
		\(
		\frac{p^2-1}{12}\pmod p
		\)
		and
		\(
		(k-g)(3k-3g-\varepsilon)\pmod p,\)
		for \( 1\leq k\leq\frac{p-1}{2}.
		\)
		Consequently, if $i$ does not belong to any of these residue
		classes, the coefficient of $q^{pn+i}$ in $f_1^2$ is zero
		modulo $2$. Comparing coefficients of $q^{pn+i}$ in
		\eqref{EOu 2n mod 2} gives
		\[
		\eou\bigl(2(pn+i)\bigr)
		=\eou(2pn+2i)\equiv0\pmod2,
		\]
		as required.
	\end{proof}

	Due to Theorem \ref{thm:cong-p}, we also have the following congruences.
	\begin{theorem}
		For all $n\ge0$, $k\ge0$, and primes $p=6g\pm1$, $g\ge1$, we have
		\begin{align}
			\label{SS mod 2}\eou\left(4\left(p^{2k+2}n+p^{2k+1}r+\dfrac{p^{2k+2}-1}{24}\right)\right)\equiv 0 \pmod{2}, \quad 1\le r\le p-1.
		\end{align}
	\end{theorem}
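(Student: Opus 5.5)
The plan is to reduce everything modulo $2$ to Euler's pentagonal number theorem. Then the congruence becomes a statement about when $24m+1$ can be a perfect square, and a $p$-adic valuation argument settles that. This route goes directly rather than iterating Theorem \ref{thm:cong-p}.

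\emph{Step 1: reduce to $f_1$ modulo $2$.} By \eqref{EOu 2n mod 2} we have $\sum_{m\ge0}\eou(2m)q^m\equiv f_1^2\equiv f_2 \pmod 2$. The right-hand side is a series in $q^2$, so extracting the even powers gives
\[
\sum_{m=0}^\infty \eou(4m)q^m\equiv f_1 \pmod 2 .
\]

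\emph{Step 2: read off the odd coefficients.} By Euler's pentagonal number theorem, $f_1=\sum_{j\in\mathbb{Z}}(-1)^jq^{j(3j-1)/2}$. The exponents $j(3j-1)/2$ are distinct for distinct $j$, so modulo $2$ the coefficient of $q^m$ is $1$ exactly when $m=j(3j-1)/2$ for some integer $j$. This happens if and only if $24m+1=(6j-1)^2$. Hence $\eou(4m)$ is odd only if $24m+1$ is a perfect square.

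\emph{Step 3: the valuation argument.} Set $m=p^{2k+2}n+p^{2k+1}r+\frac{p^{2k+2}-1}{24}$. This is an integer, since $p^2\equiv1\pmod{24}$ for a prime $p\ge5$. We compute
\[
24m+1=p^{2k+2}(24n+1)+24p^{2k+1}r=p^{2k+1}\bigl(p(24n+1)+24r\bigr).
\]
The bracket is congruent to $24r\pmod p$. This is nonzero modulo $p$, because $p\ge5$ is prime and $1\le r\le p-1$. So the exact power of $p$ dividing $24m+1$ is $p^{2k+1}$, an odd exponent, and therefore $24m+1$ is not a square. By Step 2, $\eou(4m)\equiv0\pmod 2$, which is \eqref{SS mod 2}.

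There is no serious obstacle here. The only points needing care are that $p$ is prime, which the valuation step uses, and that the pentagonal exponents are distinct, so that no cancellation occurs modulo $2$.
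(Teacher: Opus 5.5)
Your proof is correct, and it takes a genuinely different route from the paper's.

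The paper uses the Ramanathan--Evans $p$-dissection of $f_1$ (Theorem~\ref{lem:1}). Starting from $\sum_{n\ge0}\eou(4n)q^n\equiv f_1\pmod 2$, it extracts the single component whose exponents are $\equiv \frac{p^2-1}{24}\pmod p$. This gives the self-similarity $\sum_{n\ge0}\eou\bigl(4(pn+\tfrac{p^2-1}{24})\bigr)q^n\equiv f_p\pmod 2$, which the paper iterates by induction on $k$. It finally notes that $f_p$ contains no terms $q^{pn+r}$ with $1\le r\le p-1$.

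You bypass both the dissection and the induction. Euler's pentagonal number theorem shows that $\eou(4m)$ is odd only when $24m+1$ is a perfect square, and the observation that $v_p(24m+1)=2k+1$ is odd then finishes the proof in one line.

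Your argument is more elementary and self-contained. It also makes transparent a step the paper dispatches with ``it is not tough to evaluate'': that no other component of the dissection has exponents congruent to $\frac{p^2-1}{24}$ modulo $p$. Since $24\cdot\frac{j(3j-1)}{2}+1=(6j-1)^2$, that congruence holds exactly when $p\mid 6j-1$. Your approach also recovers the paper's self-similarity $\eou\bigl(4(p^2n+\tfrac{p^2-1}{24})\bigr)\equiv\eou(4n)\pmod 2$ at once, because $24\bigl(p^2n+\tfrac{p^2-1}{24}\bigr)+1=p^2(24n+1)$. What the paper's approach offers in return is an explicit generating-function congruence that fits its dissection framework and parallels its self-similarities modulo $4$ and $16$.

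A minor remark: you use only the implication ``odd coefficient $\Rightarrow$ generalized pentagonal exponent.'' So the distinctness of the pentagonal exponents, although true, is not actually needed for your argument.
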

	\begin{proof}
		From \eqref{EOu 2n mod 2}, we have
		\begin{align*}
			\sum_{n=0}^\infty \eou(4n)q^n&\equiv f_1 \pmod{2}.
		\end{align*}
		For primes $p=6g\pm 1$, it is not tough to evaluate that $(p^2-1)/24\neq (k-g)(3k-3g\mp 1)/2$ for all $1\le k\le (p-1)/2$. So, we invoke the $p$-dissection of $f_1$ in the above identity and then extract the terms involving $q^{pn+(p^2-1)/24}$ to obtain
		\begin{align*}
			\sum_{n=0}^\infty \eou\left(4\left(pn+\dfrac{p^2-1}{24}\right)\right)q^n&\equiv f_p \equiv \sum_{n=0}^\infty \eou(4n)q^{pn}\pmod{2}.
		\end{align*}
		The above congruence is a self-similarity enjoyed by $\eou(n)$. This self-similarity result, on induction, gives
		\begin{align*}
			\sum_{n=0}^\infty \eou\left(4\left(p^{2k+1}n+\dfrac{p^{2k+2}-1}{24}\right)\right)q^n&\equiv f_p\pmod{2}
		\end{align*}
		for all $k\ge 0$. The above identity immediately proves \eqref{SS mod 2} as the right side produces not term of the form $q^{pn+r}$, $1\le r\le p-1$.
	\end{proof}
	
	We next prove the following exact generating function.
	\begin{theorem}\label{thm:gf-14-8}
		We have
		\begin{align}
			\label{Gen 10n+4}\sum_{n=0}^\infty \eou\left(10n+4\right)q^n &=3\dfrac{ f_2^4f_5^6}{f_1^8 f_{10}^2}+8q\dfrac{f_2^3  f_5f_{10}^3}{f_1^7}.
		\end{align}
	\end{theorem}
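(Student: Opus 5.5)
Since $\sum_{n\ge0}\eou(2n)q^n=f_2^2/f_1^2$, the target is the $5$-dissection component of $f_2^2/f_1^2$ carrying exponents $\equiv 2\pmod 5$, because $10n+4=2(5n+2)$.

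The plan is to reuse the representation \eqref{eq:gf-2n} from the proof of Theorem~\ref{thm-2}:
\[
\sum_{n=0}^\infty \eou(2n)q^n=\frac{f_{10}^2}{f_5^6}\Big(-4q^2\psi^4(q^5)+4q\psi^2(q^5)\big(a+qb+q^3\psi(q^{25})\big)^2+f_5^2f_{25}^2\big(R^{-1}-q-q^2R\big)^2\Big).
\]
Here $a=f(q^{10},q^{15})$, $b=f(q^5,q^{20})$ and $R=R(q^5)$.

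We extract the terms with exponents $\equiv 2\pmod 5$ from each piece.
\begin{itemize}
\item The first piece contributes $-4q^2\psi^4(q^5)$.
\item In the second piece, the cross terms of degree $\equiv1$ inside the square contribute $4q\psi^2(q^5)\big(2qab+q^6\psi^2(q^{25})\big)$.
\item In the third piece, the square $\big(R^{-1}-q-q^2R\big)^2$ has $q^2$-part $q^2(1-2)=-q^2$, giving $-q^2f_5^2f_{25}^2$.
\end{itemize}
Replacing $q^5$ by $q$ and dividing by $q^2$, we get
\[
\sum_{n\ge0}\eou(10n+4)q^n=\frac{f_2^2}{f_1^6}\Big(-4\psi^4(q)+8\psi^2(q)f(q^2,q^3)f(q,q^4)+4q\psi^2(q)\psi^2(q^5)-f_1^2f_5^2\Big).
\]

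Next we turn everything into eta quotients.
\begin{itemize}
\item By the Jacobi triple product, $f(q^2,q^3)f(q,q^4)=f_2 f_5^2/(f_1 f_{10})$, using $f(q,q^4)f(q^2,q^3)=(-q;q)_\infty f_5\,(q^5;q^5)_\infty/(-q^5;q^5)_\infty$.
\item We have $\psi(q)=f_2^2/f_1$.
\item By \eqref{eq:ndb-1} we may write $q\psi^2(q)\psi^2(q^5)$ in terms of $\psi^4(q)$, $q^2\psi^4(q^5)$ and $f_1^2f_5^2$.
\item Alternatively, \eqref{bbo1} and its companion give $q\psi^2(q^5)=\frac14\big(f_2f_5^3/(f_1f_{10})-f_1^3f_{10}/(f_2f_5)\big)$ and $\psi^2(q)=\frac14\big(5f_2f_5^3/(f_1f_{10})-f_1^3f_{10}/(f_2f_5)\big)$.
\end{itemize}
Substituting these expresses the bracket as a polynomial in $X=f_2f_5^3/(f_1f_{10})$ and $Y=f_1^3f_{10}/(f_2f_5)$, using $XY=f_1^2f_5^2$. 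A direct expansion shows the bracket equals $3X^2+\dots$ plus leftover terms, where the leftover must combine into $8q f_2f_5f_{10}^3/f_1$ times $f_1^6/f_2^2$.

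The main obstacle is this last identification: matching the mixed term $8\psi^2(q)\,f_2f_5^2/(f_1f_{10})$ and the $Y$-terms with a single product $8q\,f_2^3f_5f_{10}^3/f_1^7$. For this I would use the identities \eqref{eq:nilufar-2} and \eqref{eq:nilufar-1}, which relate $f_5^5/(f_1^4f_{10}^3)$, $q f_{10}^2/(f_1^3f_2)$ and $f_2^3f_5^2/(f_1^5f_{10}^2)$. After multiplying through by suitable eta quotients, these should convert the remaining combination into the claimed two-term form. If the bookkeeping becomes unwieldy, a fallback is to verify the resulting eta-quotient identity as an identity of modular forms on $\Gamma_0(10)$, via Lemmas~\ref{Modularity of Eta-Quotients}--\ref{Order of Eta-Quotients} and a Sturm-type coefficient check.
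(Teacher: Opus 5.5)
Your extraction of the $q^{5n+2}$ terms from \eqref{eq:gf-2n} is correct and gives exactly the paper's intermediate formula. The proof breaks at the next step, where you evaluate the mixed term. By the Jacobi triple product,
\[
f(q,q^4)f(q^2,q^3)=\frac{(-q;q)_\infty}{(-q^5;q^5)_\infty}\,f_5^2=\frac{f_2}{f_1}\cdot\frac{f_5}{f_{10}}\cdot f_5^2=\frac{f_2f_5^3}{f_1f_{10}},
\]
not $f_2f_5^2/(f_1f_{10})$; you lost the factor $f_5$ coming from $1/(-q^5;q^5)_\infty=f_5/f_{10}$. With your value the mixed term is $8\psi^2(q)X/f_5$. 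The bracket is then not a polynomial in $X,Y$, and the identity you are trying to reach is false, since its terms no longer even have the same weight. The ``main obstacle'' you describe is an artifact of this slip. Neither \eqref{eq:nilufar-2} and \eqref{eq:nilufar-1} nor a Sturm check could close it. Separately, the decisive final identification is only asserted (``should convert''), never carried out.

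With the correct value, $f(q,q^4)f(q^2,q^3)=X=\psi^2(q)-q\psi^2(q^5)$, which is precisely the observation the paper makes via \eqref{bbo1}. The last step then takes one line. Put $P=\psi^2(q)=\tfrac14(5X-Y)$ and $Q=q\psi^2(q^5)=\tfrac14(X-Y)$. The bracket becomes
\begin{align*}
-4P^2+4PQ+8PX-XY=5X^2-2XY=3X^2+2X(X-Y)=3X^2+8qX\psi^2(q^5).
\end{align*}
Since $\psi(q^5)=f_{10}^2/f_5$, we have $8qX\psi^2(q^5)=8qf_2f_5f_{10}^3/f_1$. Multiplying by $f_2^2/f_1^6$ gives the theorem.

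The paper takes a different path at this point. It collapses $-4P^2+4PQ+8PX$ to $4PX$, obtains $4f_2^7f_5^3/(f_1^9f_{10})-f_2^2f_5^2/f_1^4$, and then applies \eqref{eq:nilufar-1} and \eqref{eq:nilufar-2}. Those two identities are just $P=Y+5Q$ and $X=Y+4Q$ divided by $Y$, that is, \eqref{bbo1} and its companion in disguise. So your $X,Y$ bookkeeping, once the exponent is fixed, is the same mathematics organized more transparently, and no modular-form fallback is needed.
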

	\begin{proof}
		From \eqref{eq:gf-2n}, we obtain
		\begin{align*}
			\sum_{n=0}^\infty \eou(2n)q^n=  \frac{f_{10}^2}{f_5^6} \Bigg(-4q^2\psi^4(q^5)+4q\psi^2(q^5)\{A^2(q^5)+q^2B^2(q^5)+q^6\psi^2(q^{25})\\\qquad \qquad \qquad  +2qA(q^5)B(q^5)+2q^3A(q^5)\psi(q^{25})+2q^4B(q^5)\psi(q^{25})\}\\ +f_5^2f_{25}^2\left(\frac{1}{R^2(q^5)}+q^4R^2(q^5)-\frac{2q}{R(q^5)} +2q^3R(q^5)\right)\Bigg),
		\end{align*}
		where $A(q)=f(q^2,q^3)$ and $B(q)=f(q,q^4)$.\\
		
		Expanding the right side of the above identity and then extracting the terms that involve $q^{5n+2}$, we have
		\begin{align*}
			\sum_{n=0}^\infty \eou(10n+4)q^n & = \frac{f_2^2}{f_1^6}\left( -4\psi^4(q)+4q\psi^2(q)\psi^2(q^5)+8\psi^2(q)f\left(q,q^4\right)f\left(q^2,q^3\right)-f_1^2f_5^2\right),
		\end{align*}
		which by \eqref{bbo1} reduces to
		\begin{align*}
			\sum_{n=0}^\infty \eou(10n+4)q^n &  = \frac{f_2^2}{f_1^6}( -4\psi^4(q)+4q\psi^2(q)\psi^2(q^5)+8\psi^2(q)\left(\psi^2(q)-q\psi^2(q^5)\right)-f_1^2f_5^2)\\
			&=  \frac{f_2^2}{f_1^6}( 4\psi^2(q)\left(\psi^2(q)-q\psi^2(q^5)\right)-f_1^2f_5^2)\\
			&=  4\frac{f_2^7f_5^3}{f_1^9f_{10}}-\frac{f_2^2f_5^2}{f_1^4}.
		\end{align*}
		Simplifying this further and using \eqref{eq:nilufar-1}, we now obtain
		\begin{align}
			\label{eq:ndb-gf-1}
			\sum_{n=0}^\infty \eou(10n+4)q^n&=3\frac{f_2^2f_5^2}{f_1^4}+20q\frac{f_2^3f_5f_{10}^3}{f_1^7}\\
			\label{eq:thm-10n-4} &=3\frac{ f_2^4f_5^6}{f_1^8 f_{10}^2}+8q\frac{f_2^3  f_5f_{10}^3}{f_1^7}.
		\end{align}
		This completes the proof.
	\end{proof}
	
	Using the above generating functions, we prove the following congruences.
	\begin{corollary}\label{thm:inf-1}
		For all $n\ge0$ and $k\ge0$, we have
		\begin{align}
			\label{Internal Cong mod 5 1}\eou\left(2\times 5^{2 k+3}n+\dfrac{5^{2k+4}-1}{6}\right)&\equiv3^{k+1}\eou(10n+4) \pmod 5,\\
			\label{Inf fam mod 8 cong 1}  \eou\left(2\times 5^{2k+2}n+2\times 5^{2k+1}r+\dfrac{5^{2k+2}-1}{6}\right)&\equiv  0 \pmod{16},\quad r\in\{1,2,3,4\}.
		\end{align}
	\end{corollary}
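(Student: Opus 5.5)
The plan is to work entirely from Theorem \ref{thm:gf-14-8}, namely from \eqref{eq:ndb-gf-1} for the modulus $5$ and from \eqref{eq:thm-10n-4} for the modulus $16$. First I would unwind the indices. For $k=0$, \eqref{Internal Cong mod 5 1} reads $\eou(250n+104)\equiv 3\,\eou(10n+4)\pmod 5$, and $250n+104=10(25n+10)+4$. So the statement is a self-similarity for $a(n):=\eou(10n+4)$: it says that $a(25n+10)\equiv 3a(n)\pmod 5$. Once this is established, \eqref{Internal Cong mod 5 1} follows by induction on $k$, because $5^{2k+4}-1)/6$ is mapped to $(5^{2k+6}-1)/6$ under $n\mapsto 25n+10$. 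Similarly, for $k=0$, \eqref{Inf fam mod 8 cong 1} reads $\eou(10(5n+r)+4)\equiv 0\pmod{16}$. Hence it suffices to prove two things: that $\sum a(n)q^n$ modulo $16$ is a series in $q^5$ (for $r\ne0$), and that the $q^{5n}$-part iterates correctly under $n\mapsto 25n+10$ modulo $16$. The general $k$ then follows by the same induction.

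For the modulus $5$, I would drop the term $20q(\cdots)$ in \eqref{eq:ndb-gf-1}, leaving $\sum a(n)q^n\equiv 3f_5^2\,\dfrac{f_2^2}{f_1^4}\pmod 5$. Next I would write $f_2^2/f_1^4=\psi^2(q)/f_1^2$ and use $1/f_1^2=f_1^3/f_1^5\equiv f_1^3/f_5\pmod 5$. This gives
\[\sum_{n\ge0} a(n)q^n\equiv 3f_5\,\psi^2(q)\,f_1^3\pmod 5.\]
I would then 5-dissect both factors. For $\psi$ I would use \eqref{5-diss-psi}. For $f_1^3$ I would use Jacobi's identity $f_1^3=\sum_{j\ge0}(-1)^j(2j+1)q^{j(j+1)/2}$. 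Modulo $5$, the exponents $j(j+1)/2$ lie in $\{0,1,3\}$. The class $j\equiv 2\pmod 5$ is the only one giving residue $3$, and there $2j+1\equiv0\pmod5$, so modulo $5$ the series $f_1^3$ has only residues $0$ and $1$. The residue-$0$ part comes from $j\equiv 0,4\pmod5$ and combines to $f_{25}^3$-type theta series. I would then extract the terms $q^{5n}$ (to get $a(5n)$) and afterwards the terms $q^{5n+2}$. Tracking which products of the pieces $A(q^5)$, $qB(q^5)$, $q^3\psi(q^{25})$ with the two residue classes of $f_1^3$ survive, I expect the double extraction to collapse modulo $5$ to $3\cdot 3f_5\psi^2(q)f_1^3$, coming essentially from the $q^6\psi^2(q^{25})$ piece. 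The relation $f(q,q^4)f(q^2,q^3)=\psi^2(q)-q\psi^2(q^5)$ from \eqref{bbo1} would be used to simplify the cross terms. This would give $a(25n+10)\equiv 3a(n)\pmod 5$.

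For the modulus $16$, I would start from \eqref{eq:thm-10n-4}. In the second term the factor $8$ means only its value modulo $2$ matters, and $8q\dfrac{f_2^3f_5f_{10}^3}{f_1^7}\equiv 8q\dfrac{f_5^7}{f_1}\pmod{16}$. In the first term I would use $f_1^{16}\equiv f_2^8$ and $f_5^{16}\equiv f_{10}^8\pmod{16}$, and similar reductions such as $f_1^8\equiv f_2^4\pmod 8$, to rewrite it as a unit multiple of a simpler product. Then I would 5-dissect everything with \eqref{5-diss} and with the dissection of $1/f_1$ obtained from \eqref{5-diss} (the standard $R(q^5)$ formula with $f_{25}^5/f_5^6$). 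The goal is to show that the components with residues $1,2,3,4$ cancel modulo $16$ between the two terms, and that the residue-$0$ component again reproduces $\sum a(n)q^{5n}$ after the further extraction $q^{5n+2}$.

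I expect this modulus-$16$ cancellation to be the main obstacle, since the four nonzero residue classes must vanish simultaneously and this requires identities in $R(q^5)$. If the direct computation becomes unwieldy, my fallback is to verify the two needed congruences, namely vanishing for $r\in\{1,2,3,4\}$ and self-similarity modulo $16$, as congruences between $\eta$-quotients in a common space $M_\ell(\Gamma_0(N),\chi)$ via Lemmas \ref{Modularity of Eta-Quotients}--\ref{Equivalence of Eta-Quotients}. Since that lemma is stated for primes, I would apply it to the modulus $2$ successively on the differences divided by powers of $2$.
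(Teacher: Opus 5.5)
Your index bookkeeping is fine. With $a(n)=\eou(10n+4)$, \eqref{Internal Cong mod 5 1} follows by induction from $a(25n+10)\equiv 3a(n)\pmod 5$. Likewise, \eqref{Inf fam mod 8 cong 1} follows from $a(5n+r)\equiv 0$ together with a self-similarity modulo $16$.

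The modulus-$5$ step, however, starts from a false identity. In fact $f_2^2/f_1^4=\psi(q)/f_1^3$, whereas $\psi^2(q)/f_1^2=f_2^4/f_1^4$. The correct reduction of \eqref{eq:ndb-gf-1} is $\sum a(n)q^n\equiv 3f_1f_2^2f_5=3f_5\psi(q)f_1^2\pmod 5$. Your series $3f_5\psi^2(q)f_1^3=3f_1f_2^4f_5$ is a different function. Its coefficient of $q^2$ is $3\cdot(-5)\equiv 0$, while $a(2)=\eou(24)=176\equiv 1\pmod 5$. So the Jacobi-identity dissection is aimed at the wrong series. The correct series contains $f_1^2$, not $f_1^3$, so your observation that $f_1^3$ occupies only the residue classes $0,1$ modulo $5$ does not apply to it. Even apart from this, the collapse of the double extraction is only ``expected'', and that collapse is the entire content of the step.

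The paper gets it as follows. It applies \eqref{5-diss} to $f_1$ and $f_2$ in $3f_1f_2^2f_5$, extracts $q^{5n}$, and eliminates $R(q)$ and $R(q^2)$ with a Baruah--Begum modular equation. This gives $\sum\eou(50n+4)q^n\equiv 3f_2f_5^6/f_{10}^3+3qf_1f_5f_{10}^2$. A second use of \eqref{5-diss} and extraction of $q^{5n+2}$ then yields $4f_1f_2^2f_5\equiv 3\cdot(3f_1f_2^2f_5)\pmod 5$.

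For the modulus $16$ you give no argument yet. The cancellation of the four nonzero residue classes between the two terms of \eqref{eq:thm-10n-4} is exactly what must be proved, and you supply neither the $R(q^5)$-identities this would need nor a computation. Your preliminary simplification of the first term also fails. The congruences $f_1^{16}\equiv f_2^8$ and $f_5^{16}\equiv f_{10}^8\pmod{16}$ do not touch $3f_2^4f_5^6/(f_1^8f_{10}^2)$, and $f_1^8\equiv f_2^4$ holds only modulo $8$.

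The missing idea is \eqref{eq:nilufar-2}. Applying it first to both terms of \eqref{eq:thm-10n-4}, and then to the first two resulting summands, gives the exact identity
\[
\sum_{n=0}^\infty a(n)q^n=3\frac{f_2^8f_5^{14}}{f_1^{16}f_{10}^6}-16q\frac{f_2^7f_5^9}{f_1^{15}f_{10}}+16q^2\frac{f_2^6f_5^4f_{10}^4}{f_1^{14}}-32q^2\frac{f_2^4f_{10}^6}{f_1^{10}}.
\]
Only now do your congruences $f_1^{16}\equiv f_2^8$ and $f_5^{16}\equiv f_{10}^8$ apply, giving $\sum a(n)q^n\equiv 3f_{10}^2/f_5^2\pmod{16}$. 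This is visibly a series in $q^5$, so $\eou(50n+10r+4)\equiv 0$. It also gives $\eou(50n+4)\equiv 3\eou(2n)\pmod{16}$ directly, and your $a(25n+10)\equiv 3a(n)$ follows from that, with no second dissection needed.

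Your Sturm-bound fallback is not a proof as it stands. You would need to specify the holomorphic $\eta$-quotient multipliers, the operator isolating the progressions $5n+r$, the space $M_\ell(\Gamma_0(N),\chi)$, and the bound, and none of these is given.
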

	\begin{proof}
		From \eqref{eq:ndb-gf-1}, we have
		\begin{align}
			\label{Internal Req 1} \sum_{n=0}^\infty \eou(10n+4)q^n\equiv3\frac{f_2^2f_5^2}{f_1^4} \equiv 3f_1f_2^2f_5 \pmod 5.
		\end{align}
		Using \eqref{5-diss} in the above identity and then extracting the terms that involve $q^{5n}$, we find that
		\begin{align*}
			\sum_{n=0}^\infty \eou(50n+4)q^n\equiv 3f_1f_5f_{10}^2\left(\dfrac{1}{R(q)R\left(q^2\right)}+q-q^2R(q)R\left(q^2\right)\right) \pmod 5.
		\end{align*}
		We then invoke the following modular equation from \cite[Lemma 1.3]{NDBBegum}:
		\begin{align*}
			\dfrac{1}{R(q)R\left(q^2\right)}-q^2R(q)R\left(q^2\right)&=\frac{f_2 f_5^5}{f_1 f_{10}^5}
		\end{align*}
		in the previous identity to find that 
		\begin{align*}
			\sum_{n=0}^\infty \eou(50n+4)q^n\equiv 3\dfrac{f_2f_5^6}{f_{10}^3}+3qf_1f_5f_{10}^2 \pmod 5.
		\end{align*}
		The above identity, with the help of by \eqref{5-diss} again, gives
		\begin{align*}
			\sum_{n=0}^\infty \eou(50n+4)q^n&\equiv 3\dfrac{f_5^6f_{50}}{f_{10}^3}\left(\dfrac{1}{R\left(q^{10}\right)}-q^2-q^4R\left(q^{10}\right)\right)\\
			&\quad +3qf_5f_{10}^2f_{25}\left(\dfrac{1}{R\left(q^{5}\right)}-q-q^2R\left(q^{5}\right)\right) \pmod 5.
		\end{align*}
		Now, extracting the terms involving $q^{5n+2}$ from the above identity, we obtain
		\begin{align}
			\label{Intrenal Req 2} \sum_{n=0}^\infty \eou(250n+104)q^n&\equiv -3\dfrac{f_1^6f_{10}}{f_{2}^3}-3f_1f_{2}^2f_{5}\equiv 4f_1f_{2}^2f_{5} \pmod 5.
		\end{align}
		Therefore, \eqref{Internal Req 1} and \eqref{Intrenal Req 2} imply that
		\begin{align*}
			\sum_{n=0}^\infty \eou(250n+104)q^n&\equiv 3\sum_{n=0}^\infty \eou(10n+4)q^n \pmod 5,
		\end{align*}
		which by induction gives \eqref{Internal Cong mod 5 1}.
		
		From \eqref{eq:thm-10n-4} using \eqref{eq:nilufar-2} once, we obtain
		\[
		\sum_{n=0}^\infty \eou(10n+4)q^n=3\frac{f_2^6f_5^{10}}{f_1^{12}f_{10}^4}-4q\frac{f_2^5f_5^5f_{10}}{f_1^{11}}-32q^2\frac{f_2^4f_{10}^6}{f_1^{10}}.
		\]
		Applying \eqref{eq:nilufar-2} once each on the first two summands in the above equation, we have
		\begin{align}
			\sum_{n=0}^\infty \eou(10n+4)q^n&=3\frac{f_2^8f_5^{14}}{f_1^{16}f_{10}^6}-16q\frac{f_2^7f_5^9}{f_1^{15}f_{10}}+16q^2\frac{f_2^6f_5^4f_{10}^4}{f_1^{14}}-32q^2\frac{f_2^4f_{10}^6}{f_1^{10}}\notag\\
			\label{eq:gf-50 1} &\equiv 3\frac{f_2^8f_5^{14}}{f_1^{16}f_{10}^6}\equiv 3\frac{f_5^{14}}{f_{10}^6}\pmod{16}.
		\end{align}
		which immediately gives
		\begin{align}
			\label{Inf fam mod 8 req 1} \eou(50n+10r+4)\equiv  0 \pmod{16}, \quad r\in\{1,2,3,4\}.
		\end{align}
		
		From \eqref{eq:gf-50 1}, we have 
		\begin{align}
			\label{Inf fam mod 8 req 2}\sum_{n=0}^\infty \eou\left(50n+4\right)q^n 
			&\equiv 3\frac{f_2^2}{f_1^2} \equiv 3 \sum_{n=0}^\infty \eou\left(2n\right)q^n\pmod{16}.
		\end{align}
		Induction in \eqref{Inf fam mod 8 req 2} in light of  \eqref{Inf fam mod 8 req 1} immediately gives \eqref{Inf fam mod 8 cong 1}.
	\end{proof}
	
	As a consequence of \eqref{Gen 10n+4}, we now have the following congruences modulo 32.
	\begin{corollary}\label{cor:cong-100}
		For all $n\geq 0$, we have
		\begin{equation}\label{eq:mod32}
			\eou(100n+r)\equiv 0 \pmod{32}, \quad  r\in\{14,34,74,94\}.
		\end{equation}
	\end{corollary}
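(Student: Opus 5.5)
The four progressions in \eqref{eq:mod32} are all of the form $100n+r=10(10n+j)+4$ with $j\in\{1,3,7,9\}$. So the statement is equivalent to saying that, in
\[
F(q):=\sum_{n=0}^\infty \eou(10n+4)q^n,
\]
every coefficient of $q^{10n+j}$ with $j\in\{1,3,7,9\}$ vanishes modulo $32$. I would therefore proceed in two extractions. First take the odd part of $F(q)$ via a $2$-dissection. Writing it as $q\,G(q^2)$, the goal becomes showing that the coefficients of $q^{5n+s}$ in $G(q)$ vanish modulo $32$ for $s\in\{0,1,3,4\}$. In other words, modulo $32$ the series $G(q)$ should be supported only on exponents $\equiv 2\pmod 5$.

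\textbf{Step 1 (starting form).} I would start from Theorem~\ref{thm:gf-14-8} in the form \eqref{eq:thm-10n-4}, or from the expanded four-term form obtained with \eqref{eq:nilufar-2} in the proof of Corollary~\ref{thm:inf-1}. In that form the last two terms carry factors $16q^2$ and $32q^2$, and their exponents are even. Hence the $32q^2$ term disappears modulo $32$, and the $16q^2$ term contributes nothing to the odd part. The odd part thus comes from
\[
3\frac{f_2^8f_5^{14}}{f_1^{16}f_{10}^6}-16q\frac{f_2^7f_5^9}{f_1^{15}f_{10}}.
\]

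\textbf{Step 2 (the $16q$ term).} This term only needs its even part modulo $2$. Since $f_1^{2}\equiv f_2\pmod 2$, the quotient $f_2^7f_5^9/(f_1^{15}f_{10})$ reduces modulo $2$ to $f_1f_5\cdot(\text{function of } q^2)$. So I need the even part of $f_1f_5$ modulo $2$. Modulo $2$ this is a product of two theta series ($f_1\equiv\sum q^{n(3n\pm1)/2}$), and its even part can be written explicitly. Alternatively, I can use \eqref{bbo1} modulo $2$, which gives $\psi^2(q)+q\psi^2(q^5)\equiv f_2f_5^3/(f_1f_{10})$.

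\textbf{Step 3 (the first term).} This term needs its odd part modulo $32$. I would write $1/f_1^{16}=(1/f_1^2)^8$ and expand with \eqref{disf1}:
\[
\frac1{f_1^{16}}=\sum_{i=0}^{8}\binom8i\,(2q)^i\,\frac{f_8^{5(8-i)}f_4^{2i}f_{16}^{2i-2(8-i)}}{f_2^{40}f_8^{i}}.
\]
Because of the factor $2^i\binom8i$, only $i=1,2$ survive modulo $32$ (both give $16$), and $i=1$ gives odd powers of $q$. The factor $f_5^{14}$ needs the same treatment. I would write $f_5^{14}=f_5^{16}/f_5^2$, reduce $f_5^{16}\equiv f_{10}^8\pmod{32}$ only after confirming the needed $2$-adic precision, and dissect $1/f_5^2$ by \eqref{disf1} with $q\to q^5$. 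The cross terms give the odd part as $16q\,(\cdots)$ plus $3\cdot 2q^5(\cdots)$-type terms. I expect all surviving contributions to carry a factor $16$ or more, reducing to $16q\,H(q^2)$ with $H$ determined modulo $2$.

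\textbf{Step 4 (the $5$-dissection).} Combining Steps 2 and 3, $G(q)\equiv 16\,K(q)\pmod{32}$ for some $K$ known modulo $2$. Using $f_1^2\equiv f_2$ and $f_1^4\equiv f_4$, I expect $K$ to reduce to something like $q^a f_5^{b}\cdot f_1^{c}$ or $\psi$-type products. I would then apply the $5$-dissection \eqref{5-diss} or \eqref{5-diss-psi} and check that only exponents $\equiv2\pmod5$ survive modulo $2$. This is the same mechanism by which $\eou(10n+4)$ itself appears in the proof of Theorem~\ref{thm-2}.

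\textbf{Main obstacle.} The hardest part is the $2$-adic bookkeeping in Step 3. The lemma $f_1^{2^k}\equiv f_2^{2^{k-1}}\pmod{2^k}$ only yields modulus $16$ for the sixteenth power, so the first term genuinely contributes at level $16$ and cannot simply be discarded. I would try the explicit binomial expansion of \eqref{disf1} first. If that becomes unwieldy, I would fall back on a modular-form check: verify the identity for the relevant $\eta$-quotient up to the Sturm bound of Lemma~\ref{Equivalence of Eta-Quotients}, or run \texttt{RaduRK} on the generating function in Theorem~\ref{thm:gf-14-8}.
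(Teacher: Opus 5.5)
Your skeleton is the same as the paper's: pass to the odd part $q\,G(q^2)$ with $G(q)=\sum_{n\ge0}\eou(20n+14)q^n$, and show that modulo $32$ it is supported only on exponents $\equiv 2\pmod 5$. Steps 1 and 2 are essentially fine. In Step 1 the cofactor of $16q^2$ is not itself an even series; what you actually need is that it is $\equiv f_{20}f_{40}/f_2\pmod 2$. Your Step 2 reduction via \eqref{bbo1}, $f_1f_5\equiv\psi(q^2)+q\psi(q^{10})\pmod 2$, is correct.

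The gap is in Steps 3--4: the shape you aim for, $G(q)\equiv 16K(q)\pmod{32}$, is false. Indeed $\eou(54)=8198\equiv 6\pmod{32}$, and this is the coefficient of $q^2$ in $G$. The dominant odd contribution is exactly the $3\cdot 2q^5(\cdots)$ term you mention and then dismiss. By \eqref{disf1} with $q\to q^5$, the odd part of $3f_{10}^2/f_5^2$ is $6q^5f_{20}^2f_{80}^2/(f_{10}^3f_{40})$. This term is harmless not because it is divisible by $16$, but because it is $q^5$ times a series in $q^{10}$. What must actually be proved is that every level-$16$ leftover in the odd part contributes nothing outside exponents $\equiv 5\pmod{10}$. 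These leftovers are the $i=1$ term of your expansion of $(1/f_1^2)^8$, the error in $f_5^{16}\equiv f_{10}^8$ (which holds only modulo $16$), and your $-16q$ term. Your proposal leaves this at ``I expect'', and with the wrong target Step 4 would be checking a false statement.

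The paper avoids this bookkeeping by iterating \eqref{eq:nilufar-2} until the leading term becomes $3f_2^{16}f_5^{30}/(f_1^{32}f_{10}^{14})$. At that point $f_1^{32}\equiv f_2^{16}$ and $f_5^{32}\equiv f_{10}^{16}\pmod{32}$ hold exactly, giving $\sum\eou(10n+4)q^n\equiv 3f_{10}^2/f_5^2+16q^2f_{20}f_{40}/f_2\pmod{32}$. After that, \eqref{disf1} finishes the proof.

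To complete your own route instead, write the first term as $3\,(f_2/f_1^2)^8(f_5^2/f_{10})^8\,f_{10}^2/f_5^2$. Use $f_1^2/f_2=1+2\sum_{n\ge1}(-1)^nq^{n^2}$, so that $(f_1^2/f_2)^8\equiv 1+16\sum_{n\ge1}\bigl(q^{n^2}+q^{2n^2}\bigr)\pmod{32}$. Then the level-$16$ odd part of the first term is $16f_{10}\bigl(q\psi(q^8)+q^5\psi(q^{40})\bigr)$. Your Step 2 term contributes $16qf_4f_{10}^3$ to the odd part. Since $\psi(q)\equiv f_1^3\pmod 2$, your identity $f_1f_5\equiv f_2^3+qf_{10}^3\pmod 2$ with $q\to q^4$ gives $qf_4f_{10}^3\equiv f_{10}\bigl(q\psi(q^8)+q^5\psi(q^{40})\bigr)\pmod 2$. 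So these two level-$16$ contributions cancel modulo $32$, leaving only the $6q^5(\cdots)$ term. That cancellation is the missing step.
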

	\begin{proof}
		From \eqref{eq:thm-10n-4}, we recall that
		\begin{align*}
			\sum_{n=0}^\infty \eou(10n+4)q^n&=3\frac{ f_2^4f_5^6}{f_1^8 f_{10}^2}+8q\frac{f_2^3  f_5f_{10}^3}{f_1^7}.
		\end{align*}
		Applying \eqref{eq:nilufar-2} multiple times on the right side under modulo 32, we arrive at
		\begin{align*}
			\sum_{n=0}^\infty \eou(10n+4)q^n\equiv3\frac{f_{10}^2}{f_5^2}+16q^2 \frac{f_{40} f_{20}}{f_2}\pmod{32}.
		\end{align*}
		Employing \eqref{disf1} on the first term of the above identity and extracting the odd  powered terms of $q$ from both sides, we have
		\begin{align*}
			\sum_{n=0}^\infty \eou(20n+14)q^n\equiv6q^2 \frac{f_{10}^2f_{40}^2}{f_5^3 f_{20}}\pmod{32}.
		\end{align*}
		Since the above congruence has no terms involving $q^{5n+j}$ for $j\in\{0, 1, 3, 4\}$ in the right side, we find \eqref{eq:mod32}.
	\end{proof}
	
	In the next result, we have a recurrence relation for $\eou(n)$.
	\begin{theorem}\label{th:eou:p(n)}
		For all $n\geq0$, we have
		\begin{align}\label{eq:eou:p(n)}
			\eou(2n)=\sum_{k=0} ^{\lfloor\frac{\sqrt{1+8n}-1}{2}\rfloor }p\left(n-\frac{k(k+1)}{2}\right).
		\end{align}
	\end{theorem}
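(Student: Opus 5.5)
We start from the generating function \eqref{eq:eo1}, namely $\sum_{n\ge0}\eou(2n)q^n = f_2^2/f_1^2$, and factor it as
\[
\frac{f_2^2}{f_1^2}=\frac{1}{f_1}\cdot\frac{f_2^2}{f_1}.
\]
The two factors have known series expansions. By \eqref{p(n)}, $1/f_1=\sum_{n\ge0}p(n)q^n$. For the second factor we use the product form of $\psi$ in \eqref{psi}. Since $(-q;q)_\infty=f_2/f_1$, we have
\[
\psi(q)=f_2\,(-q;q)_\infty=\frac{f_2^2}{f_1}=\sum_{k\ge0}q^{k(k+1)/2}.
\]
Together these give the identity
\[
\sum_{n=0}^\infty \eou(2n)q^n=\Bigl(\sum_{m=0}^\infty p(m)q^m\Bigr)\Bigl(\sum_{k=0}^\infty q^{k(k+1)/2}\Bigr).
\]

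Next we compare coefficients of $q^n$ on both sides. This gives
\[
\eou(2n)=\sum_{k\ge0,\ k(k+1)/2\le n}p\bigl(n-k(k+1)/2\bigr).
\]
To finish, we rewrite the summation range explicitly. The condition $k(k+1)/2\le n$ is equivalent to $k^2+k-2n\le0$, that is, $k\le(\sqrt{1+8n}-1)/2$. Since $k$ is an integer, this is the same as $0\le k\le\lfloor(\sqrt{1+8n}-1)/2\rfloor$, which is exactly the range in \eqref{eq:eou:p(n)}.

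There is no real obstacle in this argument. The only point needing care is the product identity $\psi(q)=f_2^2/f_1$, which follows from the relation $(-q;q)_\infty=(q^2;q^2)_\infty/(q;q)_\infty$ and is already recorded in \eqref{psi}. Once that is in place, the rest is a Cauchy product followed by solving a quadratic inequality for the summation bound.
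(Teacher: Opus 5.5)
Your proof is correct and follows the paper's argument exactly: you write $f_2^2/f_1^2=\psi(q)/f_1$, take the Cauchy product of $\sum_{k\ge0} q^{k(k+1)/2}$ with $\sum_{n\ge0} p(n)q^n$, and compare coefficients. Your explicit derivation of the upper summation limit from $k(k+1)/2\le n$ fills in a detail the paper leaves implicit.
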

	\begin{proof}
		From \eqref{gf:eou}, we have
		\begin{align}
			\sum_{n=0}^\infty \eou(2n)q^n=\frac{f_{2}^2}{f_{1}^2}=\frac{\psi(q)}{f_{1}}= \left(\sum_{n=0}^\infty q^{\frac{n(n+1)}{2}}\right)\left(\sum_{n=0}^\infty p(n)q^{n}\right)=\sum_{n=0}^\infty \sum_{k=0}^\infty p(n)q^{n+\frac{k(k+1)}{2}},\label{eoupn}
		\end{align}
		where we have invoked \eqref{p(n)} and \eqref{psi}.  Comparing the coefficients of $q^n$ on both sides \eqref{eoupn}, we easily arrive at \eqref{eq:eou:p(n)}.
	\end{proof}

	\section{Modular Forms Approach}\label{Modular Approach}
	In this section, we present some self-similarities of $\eou(n)$ and their consequences. It is worth noting that \eqref{eq:gf-50 1} is also a self-similarity of $\eou(n)$ modulo $16$, which does not come through the modular form approach. However, modular forms are paramount in finding multiple self-similarities modulo $4$. We state such results in the following theorem.

	\begin{theorem}\label{Thm SS EOu 4n+2}
		We have
		\begin{align}
			\label{SS EOu 4n+2 mod 4} \sum_{n=0}^\infty \eou (4pn+4r+2)q^n\equiv q^m\sum_{n=0}^\infty \eou (4n+2)q^{pn}\pmod{4}
		\end{align}
		for $(p,r,m)=(5,3,2), $ $ (7,5,3), (11,10,5), (17,3,9), $ $ (19,5,10), (23,10,12),$ and $(29,20,15)$.
	\end{theorem}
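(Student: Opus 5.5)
The plan is to convert \eqref{SS EOu 4n+2 mod 4} into a statement modulo $2$ about a single $\eta$-quotient of integral weight, and then deduce it from the vanishing modulo $2$ of the Hecke operator $T_p$ applied to that form.

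\emph{Reduction to an $\eta$-quotient.} Start from \eqref{gf:eou}, or equivalently \eqref{eq:eo1}: $\sum_{n\ge0}\eou(2n)q^n=f_2^2/f_1^2$. Apply \eqref{disf1} to $1/f_1^2$ and extract the odd powers of $q$. This gives the exact identity
\[
\sum_{n=0}^\infty \eou(4n+2)q^n=2\frac{f_2^2f_8^2}{f_1^3f_4}.
\]
Because of the factor $2$, a congruence modulo $4$ for the left side only requires the quotient $f_2^2f_8^2/(f_1^3f_4)$ modulo $2$. Using $f_{2k}\equiv f_k^2\pmod 2$, this quotient is $\equiv f_1^{13}\pmod 2$. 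We are free to rewrite $f_1^{13}$ as any product $\prod f_{2^i}^{r_i}$ with $\sum 2^ir_i=13$. I would choose one whose exponents have an even sum, for instance $f_1^3f_2^5$, so that the weight is integral. Set $G(z)=\eta_{24}^{3}\eta_{48}^{5}=q^{13}f_{24}^3f_{48}^5$, possibly multiplied by harmless factors $\eta_{\delta}^{2^{j}}/\eta_{2\delta}^{2^{j-1}}\equiv1\pmod 2$ to make the level conditions work. Writing $\sum_{n\ge0}\eou(4n+2)q^n=2\sum_{n\ge0}a(n)q^n$, we then have
\[
G(z)\equiv\sum_{n=0}^\infty a(n)q^{24n+13}\pmod 2,
\]
with $a(n)$ now read modulo $2$. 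Lemma \ref{Modularity of Eta-Quotients} and Lemma \ref{Order of Eta-Quotients} then place $G$ in some $M_4(\Gamma_0(N),\chi)$. The conditions $\sum\delta r_\delta\equiv0$ and $\sum Nr_\delta/\delta\equiv0\pmod{24}$ force $N$ to be a suitable multiple of $48$; the bare choice above needs $N=1152$, and the extra factors may reduce this. One also has to check that the orders at all cusps are nonnegative.

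\emph{Hecke step.} For each listed prime $p$, I would show $G\mid T_p\equiv0\pmod 2$. This is a statement about two forms in the same space $M_4(\Gamma_0(N),\chi)$, namely $G\mid T_p$ and $0$, so by Lemma \ref{Equivalence of Eta-Quotients} it suffices to check coefficients up to the Sturm bound. Heuristically this vanishing should hold because $f_1f_4^3$ modulo $2$ is the theta series of $x^2+12y^2$ via the pentagonal and triangular expansions, which is a CM-type series. Granting $G\mid T_p\equiv0$ and writing $G=\sum c(n)q^n$, we get
\[
c(pn)\equiv c(n/p)\pmod 2 .
\]
Hence $c(pn)\equiv0$ when $p\nmid n$, and $c(p^2n)\equiv c(n)$.

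\emph{Translating back.} Put $24N+13=p(24n+s)$, where $N=pn+r$ and $24r+13\equiv ps\pmod{24p}$. The coefficient $a(pn+r)$ vanishes unless $p\mid 24n+s$, which singles out $n\equiv m\pmod p$. When $n=pk+m$, we get $24(pn+r)+13=p^2(24k+13)$, so $a(p^2k+pm+r)\equiv a(k)$. Both conditions can be checked for each triple; for example, for $(p,r,m)=(5,3,2)$ one has $24(5n+3)+13=5(24n+17)$ and $24(5k+2)+17=5(24k+13)$. Multiplying back by $2$ gives exactly \eqref{SS EOu 4n+2 mod 4} for each $(p,r,m)$ in the list.

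\emph{Main obstacle.} The main obstacle is the Hecke verification. The level is large (on the order of $10^3$), so the Sturm bound $\tfrac{\ell N}{12}\prod_{d\mid N}(1+1/d)$ is in the thousands, and this must be combined with computing coefficients of $G$ up to $p$ times that bound for $p$ as large as $29$. I would first try to lower the level by choosing a better representative of $f_1^{13}$ modulo $2$ (inserting factors $\equiv1\pmod 2$). If that fails, I would instead argue directly from the binary quadratic form $x^2+12y^2$, using the representation $24N+13=(6j-1)^2+12(2k+1)^2$ and multiplicativity of representation numbers modulo $2$.
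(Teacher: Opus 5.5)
Your proposal is correct, but it is organized differently from the paper's proof. The paper works one prime at a time at a level divisible by $p$ (e.g.\ $\Gamma_0(20)$ for $p=5$), where $\chi(p)=0$ and so $T_p=U_p$. It multiplies $\eta_1\eta_4^3$ (i.e.\ $f_1f_4^3$) by an auxiliary $\eta$-factor in $q^p$ to form $G_p$. It then checks $G_p\mid T_p\equiv H_p\pmod 2$ against a second, separately chosen $\eta$-quotient $H_p$ that encodes $q^m\sum\eou(4n+2)q^{pn}$.

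You instead use one $p$-independent form $G=\eta_{24}^3\eta_{48}^5$ on $\Gamma_0(1152)$ with $p\nmid 1152$. Then $T_p$ keeps its $V_p$-term, and the theorem becomes the single statement $G\mid T_p\equiv 0\pmod 2$. In fact the two are equivalent, because $G$ is supported on exponents $\equiv 13\pmod{24}$ and $p^2\equiv1\pmod{24}$. This also explains where the triples come from: $24r+13\equiv0\pmod p$ and $24m+s=13p$.

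The paper's route gains small Sturm bounds ($12$ to $120$) but needs an ad hoc pair $(G_p,H_p)$ for each prime. Yours is uniform in $p$, and the price is smaller than you feared. The Sturm bound for weight $4$ on $\Gamma_0(1152)$ is $\frac{4\cdot1152}{12}\cdot\frac32\cdot\frac43=768$, not thousands. So even for $p=29$ you only need $\eou(4n+2)\bmod 4$ for $n\le 928$.

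Your CM heuristic by itself only predicts the primes $p\equiv2\pmod3$, which are inert in $\mathbb{Q}(\sqrt{-3})$. It does not separate $7$ and $19$, which split, from $13$, which fails. Your fallback via $x^2+12y^2$ settles this without any machine check:
\begin{itemize}
\item For $N\equiv13\pmod{24}$, the side conditions $\gcd(x,6)=1$, $y$ odd, $xy\neq0$ are automatic.
\item The form $x^2+12y^2$ is alone in its genus (discriminant $-48$).
\item Hence the coefficient $c(N)$ of $q^N$ in $G$ satisfies $c(N)\equiv\frac12\sum_{d\mid N}\left(\frac{d}{3}\right)\pmod 2$.
\end{itemize}
Multiplicativity then gives $c(pM)\equiv c(M/p)$ for every prime $p\equiv 2\pmod 3$. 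For $p\equiv1\pmod3$ it holds exactly when $p\not\equiv13\pmod{24}$. The only obstruction is $M=p^kM'$ with $M'$ a square, which can land in the relevant residue class only when $p\equiv 13\pmod{24}$; for example, $c(13)=1$ while $c(1/13)=0$. So this route proves the congruence for all primes $p\ge5$ with $p\not\equiv13\pmod{24}$. That also bears directly on the open question the authors pose in their concluding remarks about which tuples $(p,r,m)$ occur.
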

	
	\begin{proof}
		We prove \eqref{SS EOu 4n+2 mod 4} for the tuple $(5,3,2)$ only. The proofs for the other tuples come along the same vein after constructing relevant modular forms. We first 4-dissect \eqref{gf:eou} using \eqref{disf1}  to obtain
		\begin{align}
			\label{Gen EOu 4n+2} \sum_{n=0}^\infty \eou (4n+2)q^n&=2\dfrac{ f_2^2 f_8^2}{f_1^3 f_4}\equiv 2 f_1f_4^3\pmod{4}.
		\end{align}

		Now, we take the following auxiliary functions:
		\begin{align}
			G_5&:=\eta _1 \eta _4^3 \eta _5^3 \eta _{20}\equiv \dfrac{f_5^3 f_{20}}{2} \sum_{n=0}^\infty \eou (4n+2)q^{n+2}\pmod{2},\\
			H_5&:=\eta _1^3 \eta _4 \eta _5 \eta _{20}^3\equiv q^3\dfrac{f_1^3 f_4}{2} \sum_{n=0}^\infty \eou (4n+2)q^{5n}\pmod{2}.
		\end{align}
		Using Lemmas \ref{Modularity of Eta-Quotients} and \ref{Order of Eta-Quotients}, we derive that $G_5$ and $H_5$ both belong to $\displaystyle{M_4\left(\Gamma_0(20),\left(\dfrac{100}{\bullet}\right)\right)}$. Now, we apply the Hecke operator $T_5$ on $G_5$, which gives
		\begin{align}
			\label{G_5|T_5} G_5\mid T_5\equiv \dfrac{f_1^3 f_{4}}{2} \sum_{n=0}^\infty \eou (20n+14)q^{n+1}\pmod{2}.
		\end{align}
		
		Now, the endomorphicity of $T_5$ on $\displaystyle{M_4\left(\Gamma_0(20),\left(\dfrac{100}{\bullet}\right)\right)}$ ensures that $G_5\mid T_5$ also belongs to the space $ \displaystyle{M_4\left(\Gamma_0(20),\left(\dfrac{100}{\bullet}\right)\right)}$. Now we employ Lemma \ref{Equivalence of Eta-Quotients} on $G_5\mid T_5$ and $H_5$. Using Mathematica, we verify the following identity:
		\begin{align}
			\label{G5|T5 and H5} G_5\mid T_5\equiv \dfrac{f_1^3 f_{4}}{2} \sum_{n=0}^\infty \eou (20n+14)q^{n+1} \equiv q^3\dfrac{f_1^3 f_4}{2} \sum_{n=0}^\infty \eou (4n+2)q^{5n}\equiv H_5 \pmod{2}
		\end{align}
		is true up to the coefficient of $q^\nu$, where $\nu=12$ is the Sturm bound for $G_5\mid T_5$ and $H_5$ on the space $ \displaystyle{M_4\left(\Gamma_0(20),\left(\dfrac{100}{\bullet}\right)\right)}$. Thus, Lemma \ref{Equivalence of Eta-Quotients} proves \eqref{G5|T5 and H5} for all the coefficients of $q^n$. Cancelling the redundant factors from both sides of \eqref{G5|T5 and H5}, we obtain \eqref{SS EOu 4n+2 mod 4} for $(5,3,2)$. In the table below, we provide the modular forms and their spaces for proving \eqref{SS EOu 4n+2 mod 4} for the respective tuples, which was verified with a Mathematica calculation.
		
		\begin{table}[h]\label{table 1H}
			\renewcommand{\arraystretch}{2.1}
			\caption{Modular forms for proving \eqref{SS EOu 4n+2 mod 4}}
			\begin{tabular}{ c  c  c  c  c }
				\hline 
				$(p,r,m)$  & $G_p$  & $H_p$  &  $M_\ell (\Gamma_0(N),\chi(\bullet))$ & $\nu$ \K\D\\  \hline\vspace{1mm}
				$(7,5,3)$ & $\eta _1 \eta _4^3 \eta _7 \eta _{28}$ & $\eta _1 \eta _4 \eta _7 \eta _{28}^3$ &  $M_3 \left(\Gamma_0(112),\left(\dfrac{-196}{\bullet}\right)\right)$ &  48 \\
				\hline
				$(11,10,5)$ & $\dfrac{\eta _1 \eta _4^3 \eta _{11}^5}{\eta _{44}}$ & $\dfrac{\eta _1^5 \eta _{11} \eta _{44}^3}{\eta _4}$ &  $M_4 \left(\Gamma_0(44),\left(\dfrac{484}{\bullet}\right)\right)$  & 24 \vspace{1mm}\\
				\hline
				$(17,3,9)$ & $\dfrac{\eta _1 \eta _4^3 \eta _{68}^5}{\eta _{17}}$ & $\dfrac{\eta _4^5 \eta _{17} \eta _{68}^3}{\eta _1}$ & $M_4 \left(\Gamma_0(68),\left(\dfrac{1156}{\bullet}\right)\right)$  &  36 \vspace{1mm}\\
				\hline
				$(19,5,10)$ & $\eta _1 \eta _4^3 \eta _{19}^5 \eta _{76}^3$ & $\eta _1^5 \eta _4^3 \eta _{19} \eta _{76}^3$ & $M_6 \left(\Gamma_0(152),\left(\dfrac{1444}{\bullet}\right)\right)$  & 120 \vspace{1mm}\\
				\hline
				$(23,10,12)$ & $\eta _1 \eta _4^3 \eta _{23} \eta _{92}^3$ & $\eta _1 \eta _4^3 \eta _{23} \eta _{92}^3$ &  $M_4 \left(\Gamma_0(92),\left(\dfrac{2116}{\bullet}\right)\right)$  & 48 \vspace{1mm}\\
				\hline
				$(29,20,15)$ & $\eta _1 \eta _4^3 \eta _{29}^3 \eta _{116}$ & $\eta _1^3 \eta _4 \eta _{29} \eta _{116}^3$ &  $M_4 \left(\Gamma_0(116),\left(\dfrac{3364}{\bullet}\right)\right)$ & 60 \vspace{1mm}\\
				\hline
			\end{tabular}
		\end{table}
	\end{proof}
	
	While there is no tuple $(p,r,m)$ with $p=13$ that satisfies \eqref{SS EOu 4n+2 mod 4}, it is to be noted that the primes $p$ of the tuples $(p,r,m)$ in the theorem above are under $30$. It appears that \eqref{SS EOu 4n+2 mod 4} is true for other tuples with higher primes as well. In this regard, we pose a question in Section \ref{sec:conc}. However here, as immediate consequences of Theorem \ref{Thm SS EOu 4n+2}, we have a corollary below.
	\begin{corollary}\label{Cor SS Mod 4}
		For all $n\ge0$ and $k\ge1$, we have
		\begin{align*}
			\eou\left( 4 p^{2k}n+4 p^{2k-1}s+  \dfrac{4  p^{2 k} r+4  p^{2 k-1}m+2 p^2-4  p m-4 r-2}{p^2-1}\right) &\equiv 0 \pmod{4}
		\end{align*}
		for $(p,r,m)=(5,3,2), $ $ (7,5,3), (11,10,5), (17,3,9), $ $ (19,5,10), (23,10,12),$ $(29,20,15)$ and $0\le s <p$, $s\neq m$.
	\end{corollary}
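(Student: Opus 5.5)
The plan is to iterate the self-similarity of Theorem~\ref{Thm SS EOu 4n+2} and then use the fact that its right-hand side is supported on a single residue class. Fix one of the listed tuples $(p,r,m)$ and write $a(n):=\eou(4n+2)$. Theorem~\ref{Thm SS EOu 4n+2} states
\[
\sum_{n\ge0}a(pn+r)q^n\equiv q^m\sum_{n\ge0}a(n)q^{pn}\pmod 4 .
\]
Comparing coefficients gives two facts.

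\emph{Vanishing.} If $N\not\equiv m\pmod p$, then $a(pN+r)\equiv0\pmod4$.

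\emph{Shift.} Taking $N=pn+m$ gives $a(p^2n+pm+r)\equiv a(n)\pmod 4$ for all $n\ge0$.

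Next I iterate the shift by induction on $j\ge0$ to get
\[
a\bigl(p^{2j}n+c_j\bigr)\equiv a(n)\pmod 4, \qquad c_j:=(pm+r)\frac{p^{2j}-1}{p^2-1}.
\]
The case $j=0$ is trivial since $c_0=0$. For the inductive step, apply the shift with $n$ replaced by $p^{2j}n+c_j$, and use the identity $p^2c_j+pm+r=c_{j+1}$.

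For $k\ge1$ and $0\le s<p$ with $s\ne m$, set $N:=p^2n+ps+r$. The iterated shift with $j=k-1$ gives
\[
a\bigl(p^{2k-2}N+c_{k-1}\bigr)\equiv a(N)=a\bigl(p(pn+s)+r\bigr)\pmod 4.
\]
Since $pn+s\equiv s\not\equiv m\pmod p$, the vanishing fact shows this is $\equiv0\pmod 4$. Expanding,
\[
p^{2k-2}N+c_{k-1}=p^{2k}n+p^{2k-1}s+p^{2k-2}r+c_{k-1}.
\]

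The only real work, and the step I would check most carefully, is matching this with the argument in the statement. We need
\[
4\bigl(p^{2k-2}r+c_{k-1}\bigr)+2=\frac{4p^{2k}r+4p^{2k-1}m+2p^2-4pm-4r-2}{p^2-1}.
\]
To verify it, clear the denominator $p^2-1$ on the left. The $\pm4p^{2k-2}r$ terms then cancel, and what remains is exactly the stated numerator. Since $a(\cdot)=\eou(4\,\cdot+2)$, this gives the corollary for every listed tuple at once.
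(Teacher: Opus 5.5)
Your proof is correct and takes essentially the paper's route. The paper also iterates the self-similarity of Theorem~\ref{Thm SS EOu 4n+2} by induction on $k$, writing it as a generating-function congruence for $\eou(4p^{2k-1}n+\cdots)$ with right side $q^m\sum\eou(4n+2)q^{pn}$, and then uses that this right side has no terms $q^{pn+s}$ with $s\neq m$. Your coefficient-level version, with the shifts $c_j$ and the explicit check of the constant term, just spells out the bookkeeping the paper leaves implicit.
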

	
	\begin{proof}
		If, we employ induction on $k$ for all the tuples in \eqref{SS EOu 4n+2 mod 4}, we obtain
		\begin{align*}
			\sum_{n=0}^\infty \eou \left( 4 p^{2k-1}n+  \dfrac{4  p^{2 k} r+4  p^{2 k-1}m+2 p^2-4  p m-4 r-2}{p^2-1}\right)q^n&\equiv q^m\sum_{n=0}^\infty \eou (4n+2)q^{pn}\\
			&\quad \pmod{4}.
		\end{align*}
		Now, since no term contains $q^{pn+s}$, $s\neq m$ on the right side of the above identity, we arrive at Corollary \ref{Cor SS Mod 4} from the above identity.   
	\end{proof}
	
	\section{Algorithmic Approach}\label{Algorithmic Approach}
	In this section, we present an exact generating function of $\eou(14n+8)$, which we obtain by Radu's algorithm elaborated in Subsection \ref{sec:rk}. We also present an infinite family of congruences modulo 16. 
	\begin{theorem}\label{thm:gf-10-4}
		We have
		\begin{align}
			\sum_{n= 0}^\infty\eou(14n+8)q^n&=9\frac{f_2^8 f_7^{14}}{f_1^{16} f_{14}^6}-16 q\frac{f_2^{14} f_7^4}{f_1^{18}}-48 q^2\frac{f_2^7 f_7^7 f_{14}}{f_1^{15}}+128 q^3\frac{f_2^{13} f_{14}^7}{f_1^{17} f_7^3}\notag\\
			\label{Gen 14n+8} &\quad-176 q^4\frac{f_2^6 f_{14}^8}{f_1^{14}}-128 q^6\frac{f_2^5 f_{14}^{15}}{f_1^{13} f_7^7}.
		\end{align}
	\end{theorem}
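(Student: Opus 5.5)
Following the strategy announced in Subsection~\ref{sec:rk}, I will not run Radu's algorithm on \eqref{gf:eou} with $m=14$. I will start from
\[
\sum_{n=0}^\infty \eou(2n)q^n=\frac{f_2^2}{f_1^2}
\]
instead. Since $14n+8=2(7n+4)$, the target series is $\sum_{n\ge0} a(7n+4)q^n$, where $a(n)$ is the coefficient of $q^n$ in $f_2^2/f_1^2$. The input to \texttt{RaduRK} is therefore $M=2$, $r=\{-2,2\}$, $m=7$, $j=4$. I will first call \texttt{minN[2, \{-2,2\}, 7, 4]}. I expect a level $N=14$, possibly $N=28$, which is much smaller than the $N=56$ needed for the direct approach.

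Radu's algorithm then supplies three things. The first is an eta-quotient prefactor $q^\alpha\prod_{\delta\mid 14} f_\delta^{s_\delta}$ which makes
\[
F:=q^\alpha\prod_{\delta\mid 14}f_\delta^{s_\delta}\sum_{n\ge0}\eou(14n+8)q^n
\]
a modular function on $\Gamma_0(14)$ with poles only at $\infty$. The second is a Hauptmodul-type generator $t$, an eta-quotient in $f_1,f_2,f_7,f_{14}$ with a single pole at $\infty$; $\Gamma_0(14)$ has genus $1$, so more than one generator may be needed. The third is an explicit polynomial expression of $F$ in these generators. Here $P_{7,4}(4)=\{4\}$, because $4\cdot\frac{2\cdot 2-2}{24}$-type shifts are fixed by squares mod $7$ and $7$ is prime. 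So only one progression appears, and the output is a single identity.

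Next I will multiply through by the inverse prefactor and expand each monomial $t^k$ as an eta-quotient. This turns the identity into a sum of eta-quotients in $f_1,f_2,f_7,f_{14}$, which I will rewrite in the normalized form of \eqref{Gen 14n+8}. The leading term should be $9 f_2^8f_7^{14}/(f_1^{16}f_{14}^6)$. Indeed, its constant coefficient must be $\eou(8)=9$: the partitions of $4$ give $p(4)+p(3)+p(1)=5+3+1=9$ by Theorem~\ref{th:eou:p(n)}. This is a useful sanity check.

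As an independent verification, every term on the right side of \eqref{Gen 14n+8}, after multiplication by a common eta-quotient, is a modular form of a fixed weight on $\Gamma_0(N)$ for some $N$ dividing a small multiple of $14$. I can therefore also confirm the identity by comparing coefficients up to the Sturm bound, using Lemmas~\ref{Modularity of Eta-Quotients}, \ref{Order of Eta-Quotients} and~\ref{Equivalence of Eta-Quotients}, with the exact (not mod $p$) version of the Sturm argument. The main obstacle I expect is the genus-one level. Radu's membership test may return a representation in two generators with a nontrivial basis of the module $\mathbb{Q}[t]$-span. Collapsing that output into the six clean terms with coefficients $9,-16,-48,128,-176,-128$ will require some manual regrouping of eta-quotients. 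If that fails, I will instead posit the six-term ansatz with undetermined coefficients and solve for them from the first several coefficients of $\eou(14n+8)$, then certify the result by the Sturm bound.
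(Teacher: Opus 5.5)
This is the paper's proof. The authors also work from $\sum_{n\ge0}\eou(2n)q^n=f_2^2/f_1^2$, get $N=14$ from \texttt{minN[2,\{-2,2\},7,4]}, and read \eqref{Gen 14n+8} off the \texttt{RaduRK} output, whose two-element basis $\{1,g\}$ over $\mathbb{Q}[t]$, with $t=f_2f_7^7/(q^2f_1f_{14}^7)$, is the genus-one situation you anticipated. The only fix needed is your garbled justification of $P_{7,4}(4)=\{4\}$: the real reason is that $24\cdot 4+\sum_{\delta}\delta r_\delta=98\equiv 0\pmod 7$, so $j'\equiv 4s+\tfrac{s-1}{12}\pmod 7$ equals $4$ for every square unit $s$ modulo $168$.
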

	\begin{proof} We use \eqref{eq:eo1} and calculate \texttt{minN[2,\{-2,2\},7,4]}, which gives $N=14$, which is easily handled in a modest laptop. Radu's algorithm now gives a straight proof of \eqref{Gen 14n+8}. Here we give the output of \texttt{RK}. 
		\allowdisplaybreaks{
			\begin{doublespace}
				\begin{align*}
					\texttt{In[1] := } & \texttt{RaduRK[14,2,\{-2,2\},7,4]}\\
					& \prod_{\texttt{$\delta$|M}} (\texttt{q}^{\delta };\texttt{q}^{\delta })_{\infty }^{\texttt{r}_{\delta }}  = \sum_{\texttt{n=0}}^{\infty } \texttt{a}(\texttt{n})\,\texttt{q}^\texttt{n}\\
					& \fbox{$\texttt{f}_\texttt{1}(\texttt{q})\cdot \prod\limits_{\texttt{j}'\in \texttt{P}_{\texttt{m,r}}(\texttt{j}) } \sum\limits_{\texttt{n=0}}^\infty \texttt{a}(\texttt{mn}+\texttt{j}')\,\texttt{q}^\texttt{n} = \sum\limits_{\texttt{g}\in \texttt{AB}} \texttt{g}\cdot \texttt{p}_\texttt{g}(\texttt{t}) $} \\
					& \texttt{Modular Curve: }\texttt{X}_\texttt{0}(\texttt{N}) \\
					\texttt{Out[2] = }\\
					&\begin{array}{c|c}
						\text{N:} & 14 \\
						\hline
						\text{$\{$M,(}r_{\delta })_{\delta |M}\text{$\}$:} & \{2,\{-2,2\}\} \\
						\hline
						\text{m:} & 7 \\
						\hline
						P_{m,r}\text{(j):} & \{4\} \\
						\hline
						f_1\text{(q):} & \frac{f_1^{13} f_7^7}{q^6f_2^5 f_{14}^{15} } \\
						\hline
						\text{t:} & \frac{f_2 f_7^7}{ q^2f_1 f_{14}^7} \\
						\hline
						\text{AB:} & \left\{1,\frac{f_2^8 f_7^4}{q^3f_1^4 f_{14}^8 }-4\frac{f_2 f_7^7}{ q^2f_1 f_{14}^7}\right\} \\
						\hline
						\left\{p_g\text{(t): g$\in $AB$\}$}\right. & \left\{9 t^3-112 t^2+336 t-128,128-16 t\right\} \\
						\hline
						\text{Common Factor:} & \text{None} \\
					\end{array}
				\end{align*}
		\end{doublespace}  }
	\end{proof}
	
	The interested reader can refer to \cite{Saikia} or \cite{AndrewsPaule2} for more explanation of the method and how to read the output. As a consequence of \eqref{Gen 14n+8}, we present the following congruence..
	\begin{corollary}
		For all $n\ge0$ and $k\ge0$, we have
		\begin{align}
			\label{Inf fam mod 16 prime 7}\eou\left(2\times 7^{2k+2}n+2\times 7^{2k+1}r+\dfrac{7^{2k+2}-1}{6}\right)&\equiv  0 \pmod{16},\quad r\in\{1,2,3,4,5,6\}.
		\end{align}
	\end{corollary}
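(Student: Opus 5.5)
The plan is to copy the proof of \eqref{Inf fam mod 8 cong 1} in Corollary \ref{thm:inf-1}, with Theorem \ref{thm:gf-10-4} taking the place of Theorem \ref{thm:gf-14-8}. First we reduce \eqref{Gen 14n+8} modulo $16$. Every term except the first has coefficient $-16,-48,128,-176$ or $-128$, and each of these is divisible by $16$. Hence
\[
\sum_{n=0}^\infty \eou(14n+8)q^n\equiv 9\frac{f_2^8f_7^{14}}{f_1^{16}f_{14}^6}\pmod{16}.
\]

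Next we simplify this quotient with the standard binomial fact: if $f_1^2=f_2+2X$ with $X$ an integral power series, then $f_1^{16}=(f_2+2X)^8\equiv f_2^8\pmod{16}$. Indeed, every term after the first in the binomial expansion has coefficient $\binom{8}{j}2^j$ with $j\ge1$, and these are all divisible by $16$. The same fact gives $f_7^{16}\equiv f_{14}^8\pmod{16}$. Since $f_1$ and $f_7$ are units in $\mathbb{Z}[[q]]$, we may divide these congruences, which gives $f_2^8/f_1^{16}\equiv 1$ and $f_7^{14}/f_{14}^6\equiv f_{14}^2/f_7^2\pmod{16}$. Therefore
\[
\sum_{n=0}^\infty \eou(14n+8)q^n\equiv 9\frac{f_{14}^2}{f_7^2}= 9\sum_{n=0}^\infty \eou(2n)q^{7n}\pmod{16},
\]
where the last equality uses $\sum_{n\ge0}\eou(2n)q^n=f_2^2/f_1^2$ with $q\mapsto q^7$.

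The right side contains only powers $q^{7n}$, so extracting the other residue classes modulo $7$ gives two facts:
\begin{align*}
\eou(98n+14r+8)&\equiv 0\pmod{16},\quad 1\le r\le 6,\\
\eou(98n+8)&\equiv 9\,\eou(2n)\pmod{16}.
\end{align*}
The first line is exactly the case $k=0$ of \eqref{Inf fam mod 16 prime 7}, because $(7^2-1)/6=8$.

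For the induction step, assume \eqref{Inf fam mod 16 prime 7} holds for $k$. The second relation, with $n$ replaced by $N=7^{2k+2}n+7^{2k+1}r+(7^{2k+2}-1)/12$, gives
\[
\eou(2N)\equiv 9^{-1}\,\eou(98N+8)\pmod{16}.
\]
Here $9$ is invertible modulo $16$, and $N$ is an integer because $7^{2k+2}\equiv1\pmod{12}$. It remains to compute the argument. We have
\[
98N+8=2\cdot 7^{2k+4}n+2\cdot7^{2k+3}r+\frac{49(7^{2k+2}-1)}{6}+8=2\cdot 7^{2k+4}n+2\cdot7^{2k+3}r+\frac{7^{2k+4}-1}{6},
\]
and this is exactly the statement for $k+1$. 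The only possible obstacle is this arithmetic check, together with confirming that every coefficient except the first in \eqref{Gen 14n+8} is divisible by $16$; both are routine.
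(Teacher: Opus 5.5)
Your proof is correct and follows the paper's route. Both reduce \eqref{Gen 14n+8} modulo $16$ to $9f_{14}^2/f_7^2$, read off the $k=0$ case and the self-similarity $\eou(98n+8)\equiv 9\,\eou(2n)\pmod{16}$, and iterate; the only difference is that the paper iterates the self-similarity first and then substitutes $n\mapsto 49n+7r+4$, while you induct directly on the statement. You also correctly keep the factor $9$ (and the correct offsets), which the paper's written proof drops when it states and iterates the self-similarity.
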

\begin{proof}
	From \eqref{Gen 14n+8}, we have
\begin{align}
\sum_{n= 0}^\infty\eou(14n+8)q^n&\equiv9\frac{f_2^8 f_7^{14}}{f_1^{16} f_{14}^6}\nonumber\\
&\equiv9\frac{f_{14}^2}{f_7^2}\pmod{16}.\label{14n+8_iterate}
\end{align}
Comparing the coefficients of the terms involving $q^{7n+r}$ for $r\in\{1,2,3,4,5,6\}$, we obtain
\begin{align}\label{cong:infinitefam}
	\eou(98n+14r++8)\equiv0\pmod{16}.
\end{align}
From \eqref{14n+8_iterate}, we have
\begin{align*}
\sum_{n= 0}^\infty\eou(14n+8)q^n&\equiv9\frac{f_2^8 f_7^{14}}{f_1^{16} f_{14}^6}\\
&\equiv \eou(2n)q^{7n}.
\end{align*}
Comparing coefficients pf $q^{7n}$ on both sides of the above, we arrive at
\begin{align*}
	\eou(2\cdot7^2n+8)\equiv\eou(2n)\pmod{16}.
\end{align*}
Iterating the above, we obtain
\begin{align*}
	\eou\left(2\cdot7^{2k}n+\dfrac{7^{2k+2}-1}{6}\right)\equiv\eou(2n)\pmod{16}.
\end{align*}
Replacing $n$ by $7^2n+7r+4$ for $r\in\{1,2,3,4,5,6\}$ and then invoking \eqref{cong:infinitefam}, we readily arrive at \eqref{Gen 14n+8}.
\end{proof}
	Our next result gives a set of congruences modulo $64$.
	\begin{theorem}\label{thm:cong64}
		For all $n\geq 0$, we have
		\begin{equation}\label{eq:mod64}
			\eou(98n+r)\equiv 0 \pmod{64},  \quad  r\in\{22, 36, 64, 50, 78, 92\}.
		\end{equation}
	\end{theorem}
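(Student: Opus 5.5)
The six residues are exactly $r=14k+8$ with $1\le k\le 6$:
\[
22=14\cdot1+8,\quad 36=14\cdot2+8,\quad 50=14\cdot3+8,\quad 64=14\cdot4+8,\quad 78=14\cdot5+8,\quad 92=14\cdot6+8 .
\]
So \eqref{eq:mod64} says that, modulo $64$, the series $\sum_{n\ge0}\eou(14n+8)q^n$ contains no monomials $q^{7n+k}$ with $1\le k\le 6$. Equivalently, writing $98n+r=2(49n+j)$, it is the statement that $\eou(2(49n+j))\equiv 0\pmod{64}$ for $j\in\{11,18,25,32,39,46\}$. My plan is to work with the generating function $f_2^2/f_1^2$ for $\eou(2n)$ from \eqref{eq:eo1}, since this is the input with small $M=2$ that kept the computation manageable in Theorem \ref{thm:gf-10-4}.

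\emph{Main route (Radu's algorithm).} First I will run \texttt{minN[2,\{-2,2\},49,j]} for each of the six values of $j$, to confirm that the $\Delta^\ast$ criterion is satisfied by a small level, hopefully $N=14$ as before. Then I will call \texttt{RaduRK[N,2,\{-2,2\},49,j]}. For each $j$ the output has the shape
\[
f_1(q)\sum_{n\ge 0}\eou(98n+2j)q^n=\sum_{g\in AB} g\cdot p_g(t),
\]
where $f_1(q)$ is an eta-quotient prefactor with integer coefficients and leading coefficient $\pm q^{-a}$, the Hauptmodul $t$ and the elements of $AB$ are $q$-series with integer coefficients, and the $p_g$ are polynomials. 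The congruence for that $j$ then follows once every coefficient of every $p_g$ is divisible by $64$: the right-hand side is then $\equiv0\pmod{64}$, and the prefactor is invertible as a power series over $\mathbb Z$, so it can be divided out. This is the same mechanism that gave the factor $16$ in \eqref{Gen 14n+8}. It is also possible that $P_{m,r}(j)$ contains more than one element; in that case the left side is a product of several such series. I would then either check that every factor is divisible by a suitable power of $2$, or fall back on the $m=7$ route below.

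\emph{Backup route (elementary).} Start from \eqref{Gen 14n+8} and reduce each of its six terms modulo $64$. The terms with coefficient $128$ vanish. The terms $-16q(\cdots)$ and $-48q^2(\cdots)$ only need to be known modulo $4$; the term $-176q^4(\cdots)$ only modulo $4$ as well, since $176=16\cdot 11$. The leading term $9f_2^8f_7^{14}/(f_1^{16}f_{14}^6)$ must be known modulo $64$.

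For the low-precision terms I will use $f_1^2\equiv f_2\pmod 2$, which lifts to $f_1^{4}\equiv f_2^2\pmod 4$, $f_1^{8}\equiv f_2^4\pmod 8$, and so on. For the leading term I will write $f_1^{16}=f_2^8+2^4(\cdots)$ with the error made explicit, or use \eqref{disf1} iteratively in the style of Corollary \ref{cor:cong-100}. Once everything is reduced, I will apply the $7$-dissection \eqref{eq:7-d} to whatever factors in $f_1$ remain. The goal is that, after cancellation, the total is congruent modulo $64$ to a function of $q^7$.

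\emph{Main obstacle.} I expect the difficulty to be the cancellation in the leading term modulo $64$. The individual terms need not be free of $q^{7n+k}$; only their combination is. For that reason I would first try the Radu route, which certifies the whole combination at once, and the only risk there is the computational size of the level $N$.
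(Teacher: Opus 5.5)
Your main route is the paper's. The paper also takes $f_2^2/f_1^2$ from \eqref{eq:eo1}, gets $N=14$ from \texttt{minN[2,\{-2,2\},49,11]}, and runs RK with $m=49$. It does so only twice, with $j=11$ and $j=25$, because the case you treat as a contingency is exactly what happens. Since $\sum_\delta \delta r_\delta=2$, the condition $j'\in P_{49,r}(j)$ reads $12j'+1\equiv s(12j+1)\pmod{49}$ with $s$ a unit square. As $12\cdot 11+1=7\cdot 19$ and $12\cdot 25+1=7\cdot 43$, the orbits have size $3$: $P_{49,r}(11)=\{11,18,32\}$ and $P_{49,r}(25)=\{25,39,46\}$. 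This is the paper's grouping $r\in\{22,36,64\}$ and $r\in\{50,78,92\}$.

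So each run certifies an identity for a product $A_{j_1}A_{j_2}A_{j_3}$, where $A_{j'}:=\sum_{n\ge0}\eou(98n+2j')q^n$. Your step ``all coefficients of the $p_g$ are divisible by $64$, divide out the prefactor'' then only gives $A_{j_1}A_{j_2}A_{j_3}\equiv 0\pmod{64}$. That says nothing about any individual factor. Your fallback, checking that every factor is divisible by a suitable power of $2$, is the statement to be proved.

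Here is what is missing. For $F\in\mathbb{Z}[[q]]$ let $v_2(F)$ be the least $2$-adic valuation of its coefficients. Then $v_2(FG)=v_2(F)+v_2(G)$, because $\mathbb{F}_2[[q]]$ is an integral domain, and the RK prefactor is $q^{-c}$ times a series with constant term $1$. Suppose $2^V$ divides the right-hand side, and $u_k\ge v_2(A_{j_k})$ are upper bounds read off from explicit coefficients; for example, $\eou(36)=960=2^6\cdot 15$ gives $v_2(A_{18})\le 6$. Then $v_2(A_{j_i})\ge V-\sum_{k\ne i}u_k$. You need this to be at least $6$ for every $i$, so in particular $V\ge 18$, not $V\ge 6$.

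The paper's sketch does not spell this step out either, but your argument depends on it. Your elementary backup does not replace it. It stops precisely at the mod-$64$ cancellation in the leading term, which you yourself flag as the obstacle, and the paper lists an elementary proof of this theorem as an open problem.
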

	\begin{proof}[Sketch of the proof]
		We again use \eqref{eq:eo1}, and calculate \texttt{minN[2,\{-2,2\},49,11]}, which gives $N=14$. Running \texttt{RK[14,2,\{-2,2\},49,11]} will give a proof of the first three congruences, namely when $r=22, 36,$ and $ 64$. Further, running \texttt{RK[14,2,\{-2,2\},49,25]} will give a proof of the last three congruences, namely when $r=50, 78,$ and $ 92$. The interested reader can check the outputs available  \href{https://manjilsaikia.in/publ/mathematica/EOu.nb}{here}.
	\end{proof}
	
	\section{Concluding Remarks}\label{sec:conc}
	\begin{enumerate}
		
		\item As a consequence of Theorem \ref{thm-2}, we now have the following congruence via Goswami and Jha \cite[Remark 5.5]{GoswamiJha}
		\[
		\eob(250n+58)\equiv \eob(250n+108)\equiv 0 \pmod 2.
		\]
		and the following congruence via Goswami and Jha \cite[Remark 6.10]{GoswamiJha}
		\[
		\eou(250n+154)\equiv \eou(250n+54)\equiv 0 \pmod 2.
		\]
		It would be interesting to find independent justifications of these congruences.
		\item Congruences \eqref{Inf fam mod 8 req 1} and \eqref{eq:mod32} seem to be parts of a larger family of congruences. We leave it as an open problem to find these family of congruences.
		\item  We leave another problem to find all the tuples $(p,r,m)$, which fit in \eqref{SS EOu 4n+2 mod 4}.
		\item The proofs of \eqref{Gen 14n+8} and Theorem \ref{thm:cong64} are via an algorithmic process, it would be interesting to find elementary proofs for them.
		\item The pattern of \eqref{Inf fam mod 8 cong 1} and \eqref{Inf fam mod 16 prime 7} seems to carry forward. We make the following conjecture.
		\begin{conjecture}
			For all  $n\ge0$, $k\geq 0$ and $1\leq r\leq 10$, we have
			\[
			\eou\left(2\times 11^{2k+2}n+2\times 11^{2k+1}r+\dfrac{11^{2k+2}-1}{6}\right)\equiv  0 \pmod{16}.
			\]
		\end{conjecture}
        \item There is also a more general conjecture that we make.
        \begin{conjecture}
            For all $n, k\geq 0$, prime $p>3$ with $p\equiv 3 \pmod 4$ and $1\leq r\leq p-1$, we have
            \[
\eou\left(2p^{2k+2}n+2p^{2k+1}r+\frac{p^{2k+2}-1}{6}\right)\equiv 0 \pmod{64}.
            \]
        \end{conjecture}
                \item We also make the following related self-similarity conjecture.
        \begin{conjecture}
            For primes $p>3$ with $p\equiv 3\pmod 4$, we define $\lambda_p:=p^2+3p+3$, then we have, for all $n, k\geq 0$
            \[
            \eou\left(2p^{2k}n+\frac{p^{2k}-1}{6}\right)\equiv \lambda_p^k\eou(2n) \pmod {64}. 
            \]
        \end{conjecture}
		\item In this paper, we have found new congruences modulo $2^i$ for $i\in\{1,2,4,5,6\}$ for $\eou(n)$. It seems that $\eou(n)$ satisfies congruences modulo higher powers of $2$ as well, which the interested readers may investigate.
		\item Note that a result similar to Ramanujan's recurrence for the partition function \cite[p.108]{BerndtIV}:
		\[
		np(n)=\sum_{i=1}^n\sigma(i)p(n-i),
		\]
		where $\sigma(i)$ is the divisor-sum function, can also be found for $\eou(n)$ by using the general technique of Bal and Bhatnagar \cite{gaurav}. We leave that as an exercise for the interested reader. Several other recurrences (not listed above) also seem to follow as corollaries of results of Bal and Bhatnagar \cite{gaurav}.
	\end{enumerate}
	
	\section*{Acknowledgements}
	This work was completed in the summer of 2024, when we submitted the work in 2026, the third author asked OpenAI's ChatGPT 5.6 Sol to review the paper. ChatGPT caught a few errors in the original draft, which we have since corrected. No new results or proof ideas were suggested by ChatGPT, all the mathematical content and ideas came from the authors.

	\bibliographystyle{alpha}

\end{document}